\renewcommand{\leq}{\leqslant}
\renewcommand{\geq}{\geqslant}
\renewcommand{\ge}{\geqslant}
\newtheorem{theorem}{Theorem}
\newtheorem{lemma}[theorem]{Lemma}
\newtheorem{corollary}[theorem]{Corollary}
\theoremstyle{definition}
\newtheorem{definition}[theorem]{Definition}
\newtheorem{problem}{Problem}
\newtheorem{example}[theorem]{Example}
\title[Orbit theory and locally finite permutations]{Orbit theory, locally finite permutations and Morse arithmetic}
\begin{document}
\author{A.~M.~Vershik}
\address{St.~Petersburg Department of Steklov Institute of Mathematics and
Max Plank Inst. Bonn} \email{vershik@pdmi.ras.ru}
\thanks{Partially supported by NSh
2460.2008.1112, and RFBR 08-01-00379-a}
\subjclass[2010]{Primary }
\maketitle 

\rightline{\it \textbf{To the memory of my wife Rita}}

\bigskip
\begin{abstract}
The goal of this paper is to analyze two measure preserving
transformation of combinatorial and number-theoretical origin
from the point of view of ergodic orbit theory. We study the
Morse transformation (in its adic realization in the group
$\textbf{Z}_2$ of integer dyadic numbers, as described by the
author [{\sl J. Sov. Math.} {\textbf{28}}, 667--674 (1985);
{\sl St. Petersburg Math. J.} {\textbf{6}} (1995), no. 3,
529--540]) and prove that it has the same orbit partition as
the dyadic odometer. Then we give a precise description of time
substitution of the odometer, which produces the Morse
transformation. It is convenient to describe this time
substitution in the form of random re-orderings of the
group~$\mathbb Z$, or in terms of random infinite permutations
of the group~$\mathbb Z$. We introduce the notion of {\it
locally finite permutations (LFP) or locally finite bijection (LFB),
and uniformly locally finite time
substitution (ULFTS)} for the group~$\mathbb Z$ (and for
all amenable groups). Two automorphisms which have the same
orbit partitions are called {\it allied} if the time
substitution of one to another is ULFTS.
Our main result is that the Morse transformation
and the odometer are allied. The theory of random infinite
permutations on the group~$\mathbb Z$ (and on more general
groups) is as strong as the ergodic theory of actions of the
group. The main task in this area is the investigation of
infinite permutations, and measures on the space of infinite
permutations, as well as the study of linear orderings
on~$\mathbb Z$. The class of locally finite permutations is a
useful class for such an analysis.
\end{abstract}

\maketitle

\section{Introduction}

We consider in detail an example of time changing of a measure
preserving transformation, namely the Morse transformation as a
time-substitution of the odometer, which is the simplest
ergodic transformation. The odometer is the operation of adding
unity,~$Tx=x+1$, in the additive group~$\textbf{Z}_2$ of
integer dyadic numbers. The main result is the calculation of a
random re-ordering on the orbits of the odometer which presents
the Morse transformation. The old observation of the
author~\cite{V1,VL} is that these two transformations have the
same orbit partition in their natural realization in the
space~$\textbf{Z}_2$; see also the recent paper~\cite{VS}.
Until now no explicit form of time substitution for a
non-trival pair of automorphisms with the same orbit partitions
was known. The odometer and the Morse transformation are
perhaps the first useful example. Notice that the Morse
transformation is a two-point extension of the odometer, and
has non-discrete part in the spectrum, but a more delicate
relation between them will become clear from the description of
the corresponding time substitution. Namely, we prove that they
are {\it allied} in the sense defined below.

In Section~2 we recall the main facts of orbit theory (Dye's
theorem in particular), and formulate the general problem about
possible reduction of ergodic theory to the theory of random
permutations of the naturals numbers. Sections~3 and~4 are
devoted to the Morse transformation and its realization as an
adic transformation. This presentation allows us to prove that
the orbit partitions of the odometer and of the Morse
transformation are the same. The main result, which we explain
in Section~5, consists of a description of the algorithm for
the time substitution or re-ordering of the orbits of the
odometer to the orbits of the Morse transformation. Finite
substitutions of the set~$\{2^n,2^n+1, \dots,2^{n+1}-1\}$ -- so
called Morse substitutions -- play the key role in this
algorithm. We emphasize the opening up of the possibility to
study new models of measure preserving transformations, and new
related tools. By this we mean that studying the group of
permutations of the group~$\mathbb Z$ and the space of linear
orders on~$\mathbb Z$, as well as invariant measures on those
spaces. For example, we define a new relationship between two
measure preserving transformations or between two actions of an
amenable group with the same orbit partitions as follows. We
say that the actions {\it are allied} if there is a time
substitution of one action to another which is locally finite
with the fixed set of finite permutations,
which is the same for almost every orbit. In this case we called
time substitution as {\it uniformly locally finite time substitution.}
(ULFTS).
 We prove that {\it the odometer and the Morse
transformation are allied}. More precisely, the
corresponding measure on the group of permutation of~$\mathbb
Z$ is concentrated on the permutations which almost preserve
increasing sequences of intervals and finite permutations of intervals
depends on the length of interval only. A more convenient language
here is that of {\it re-ordering} of the group~$\mathbb Z$. In
general, we can say that {\it ergodic theory (for actions of
the group~$\mathbb Z$) can be considered as the theory of
random permutation of the naturals, or as the theory of random
linear orderings on the naturals.} The group acts on this
space, and this action is universal in the sense that any
action (up to isomorphism) can be realized in this way. The
re-ordering of the orbit show us how different are the two
transformations, or, how random is the re-ordering of their
orbits. This point presumably is useful for automorphisms with
positive entropy (in order to measure the difference between
Bernoulli and non-Bernoulli automorphisms). The very special
and ingenious structure ({\it of locally finite ordering}) in
the case of the Morse transformation is also interesting as a
new source of measures in the group of infinite permutations.

The similar definitions of locally finite bijection and
allied actions can be done for an arbitrary amenable group.

\section{Ergodic theory as analysis of infinite permutations}

\subsection{Dye's theorem and orbit theory}
The simplest ergodic transformation is undoubtedly the~2-adic
odometer (or ``adding machine''):
\[
T:x\mapsto x+1, \quad x \in \textbf{Z}_2,
\]
where~$\textbf{Z}_2$ is the additive group of~2-adic integers
equipped with Haar measure~$m$. Indeed,~$T$ has discrete
spectrum, comprising the group of all roots of unity of
order~$2^n, n=0,1 \dots$. Although the structure of this
automorphism is very simple, its orbit partition is universal
in the class of ergodic measure-preserving transformations, as
mentioned above.

\begin{theorem}[H. Dye~\cite{D}]\label{dyetheorem}
For each ergodic measure preserving transformation~$M$ of the space~$(X,m)$,
there exists a transformation~$S$ of the same space which is metrically isomorphic to
the~$2$-odometer~$T$ and
which has the same orbit partition as~$M$, so that~$M$ is a
\emph{time change} of~$S$:
\[
Mx=S^{t(x)}x,
\]
where~$t(\cdot)$ is a~${\mathbb Z}$-valued measurable function on~$X$, and {\it vice versa}, there exists a
measurable function~$n(\cdot)$
such that~$Sx=M^{n(x)}x$.
\end{theorem}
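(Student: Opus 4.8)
The plan is to prove both halves by realizing the orbit partition of $M$ as that of a $2$-adic odometer built \emph{inside} the $M$-orbits, through a refining sequence of Rokhlin towers. The guiding principle is that the orbit partition of an ergodic measure preserving transformation is \emph{hyperfinite}, and that the $2$-adic structure of the odometer can be installed on it by a nested sequence of finite partitions whose classes are $M$-towers of dyadic height.

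First I would invoke Rokhlin's lemma: for each $n$ and each $\varepsilon_n>0$ there is a base $B_n$ whose iterates $B_n, MB_n,\dots,M^{2^n-1}B_n$ are disjoint and cover all but measure $\varepsilon_n$ of $X$. Choosing $\sum_n \varepsilon_n<\infty$ will let Borel--Cantelli control the exceptional sets at the end. The substantive step is to arrange these towers to be \emph{nested dyadically}: the level partition $\xi_{n+1}$ refines $\xi_n$, and the combinatorics of how a column of the $(n+1)$-tower sits over the $n$-tower reproduces the passage from residues $\bmod\ 2^n$ to residues $\bmod\ 2^{n+1}$. This is carried out by a back-and-forth (cutting and stacking) construction, absorbing the stage-$n$ tower into the stage-$(n+1)$ tower while keeping the discarded measures summable.

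On the resulting nested towers I would define $S$ to act as ``adding one with carrying'' in the $2$-adic address determined by the levels: within a tower $S$ advances a point one level, and the carrying rule is dictated by the dyadic nesting. Since the bases shrink appropriately, these partial maps converge almost everywhere to a single transformation $S$, and sending each point to its $2$-adic address furnishes a metric isomorphism of $S$ with the odometer $T$ on~$(\textbf{Z}_2,m)$, since by construction $S$ is conjugate to $x\mapsto x+1$.

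It remains to check that $S$ and $M$ share their orbits and to extract the time-change functions. Every elementary move of $S$ is a power of $M$ (a displacement within an $M$-tower), so $R_S\subseteq R_M$; conversely, for almost every $x$ the points $x$ and $Mx$ lie in a common tower for all large $n$, hence are joined by $S$-powers, giving $R_M\subseteq R_S$. Once the orbit partitions coincide, the integer-valued functions $t(\cdot)$ and $n(\cdot)$ with $Mx=S^{t(x)}x$ and $Sx=M^{n(x)}x$ exist and are measurable, since along a fixed countable orbit the exponent locating $Mx$ (respectively $Sx$) is uniquely determined and varies measurably in $x$. The main obstacle is the dyadic nesting of the second step: making the Rokhlin towers genuinely compatible so that they define a single $2$-adic odometer structure, rather than a mere sequence of unrelated approximations, is the combinatorial core, and it is here that the $2$-adic carrying rule must be matched to the tower refinements with summable error.
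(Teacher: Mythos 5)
First, a point of comparison: the paper does not prove this statement at all. It is quoted as Dye's theorem, with the proof delegated to the references (\cite{D}, and the footnote's account of the proofs via von Neumann--Murray, Vershik's lacunary theorem~\cite{V} used by Belinskaya~\cite{B}, and Ornstein--Weiss~\cite{OW}, Connes--Feldman--Weiss~\cite{CFW}). So your attempt must be judged as a proof of a known, genuinely hard theorem, and judged that way it has a real gap at exactly the step you defer. The dyadic nesting you call ``the combinatorial core'' is not a technicality that cutting-and-stacking delivers with summable error: in the form your verification presupposes, it is \emph{impossible}. Your check of $R_M\subseteq R_S$ (writing $R_M$, $R_S$ for the orbit equivalence relations) assumes that consecutive levels of each tower are related by $M$ itself, so that ``$x$ and $Mx$ lie in a common tower for all large $n$, hence are joined by $S$-powers.'' But if the columns of the $n$-th tower are $M$-orbit intervals of length $2^n$ and each $(n{+}1)$-st column is two $n$-th columns stacked $M$-consecutively, then the $2$-adic address map $\varphi$ intertwines $M$ itself (not merely $S$) with $T$: for a.e.\ $x$ one gets $\varphi(Mx)=\varphi(x)+1$, so by unique ergodicity of the odometer, $(\textbf{Z}_2,\mathrm{Haar},T)$ would be a measure-theoretic \emph{factor} of $(X,m,M)$, forcing $M$ to have every dyadic eigenvalue $e^{2\pi ik/2^n}$. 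This fails for every weakly mixing $M$ (e.g.\ Bernoulli shifts), which the theorem certainly covers. Hence any correct construction must break the $M$-order inside the classes: the finite classes are unions of $M$-orbit pieces separated by gaps (spacers), $S$ genuinely jumps, and then your Borel--Cantelli argument collapses, because $x$ and $Mx$ lying in a common tower no longer implies they lie in a common class --- $Mx$ may sit in a gap. Controlling, summably in $n$, the measure of points whose $M$-image escapes their class is precisely the content of the cited proofs, and it is absent from your sketch.

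There is a second, independent gap at the last step. Even granted a nested dyadic structure whose classes exhaust $R_M$, your claim that the address map is a metric isomorphism of $S$ with $T$ (``since by construction $S$ is conjugate to $x\mapsto x+1$'') is false without an extra hypothesis: in general the address map is only a factor map. Concretely, let $M$ be an ergodic skew product $M(z,i)=(z+1,\,i+c(z))$ on $\textbf{Z}_2\times({\mathbb Z}/2)$ over the odometer --- the Morse transformation, the very subject of this paper, is of this type. Pulling back the standard dyadic towers of the base, one obtains Rokhlin towers for $M$ with levels $T^jB_n\times({\mathbb Z}/2)$ that are exactly nested, have zero error, and whose ``advance one level'' map is $M$ itself; all of your stated requirements hold, your construction outputs $S=M$, and your final step would conclude $M\cong T$. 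That is wrong: an ergodic two-point skew extension of $T$ is never isomorphic to $T$ (the odd subspace of $L^2$ carries either a non-dyadic eigenvalue or continuous spectrum --- for the Morse transformation it is the singular continuous component). The missing ingredient is that the towers must additionally refine a sequence of partitions generating the $\sigma$-algebra $\bmod\ 0$, so that the address map is injective a.e.; only then does one get an isomorphism rather than a factor, and in the skew-product example this forces the successor order to differ from the $M$-order along orbits. Arranging nesting, exhaustion of $R_M$, and generation simultaneously is exactly what Dye's argument accomplishes; without these two repairs your outline proves neither inclusion of the theorem's conclusion.
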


Isomorphism between~$T$ and~$S$ means the existence of an
invertible measure-preserving map~$V$ such
that~$VTV^{-1}=S$,~$V:X \rightarrow X,$ and~$V\mu=\mu$. So, by
Dye's theorem, each ergodic automorphism is isomorphic to an
automorphism of~$\textbf{Z}_2$, preserving Haar measure, and
with the same orbit partition as the odometer.

The functions~$t(\cdot), n(\cdot)$ are called
\emph{time-substitutions} or \emph{jump-functions}.

So, the odometer has a universal orbit partition: the orbit
partition of any ergodic transformation~$M$ is metrically
isomorphic to the orbit partition of the odometer~$T$. This
partition is a standard hyper-finite countable homogeneous
ergodic partition, that is the union of the decreasing ergodic
sequence of the finite~$2^n, n=1, \dots$ homogeneous
partitions\footnote{We will not dwell on the interesting
history of this theorem, which goes back to J. von Neumann and
Murray~\cite{vN} (where it is phrased in terms of the
uniqueness of type-$II_1$ hyper-finite factors), and to several
papers of H.Dye~\cite{D}. In~\cite{V} the author proved a
lacunary theorem for homogeneous sequences of measurable
partitions, and R. Belinskaya~\cite{B} used these as the main
ingredient for a new proof of Dye's theorem (not
known to us at that time), see also the later paper~\cite{HK}. The
final result in this direction was proved by Ornstein and
Weiss~\cite{OW} and Connes, Feldman, and Weiss~\cite{CFW}: all
ergodic measure preserving actions of countable amenable groups
have hyper-finite (or ``tame'') orbit partitions, isomorphic to
the orbit partition defined by the odometer. Together with the
previous results, this gives a characteristic property of
measure-preserving actions of amenable groups.}.

By definition, the orbit partition of the odometer~$T$ is the
partition into the cosets of the subgroup~$\mathbb Z$ in the
group~$\textbf{Z}_2$. We will prove that it
coincides~$\pmod{0}$ with the \emph{tail partition} -- that is,
the partition into the cosets of the group~$\sum_1^{\infty}
({\mathbb Z}/2)$ in the group~$\prod_1^{\infty} ({\mathbb
Z}/2)$. There is a difference between these two partition on
the countable set~$\mathbb Z$ due to the fact that positive and
negative integers belong to different tail classes, namely the
classes of~$\{0\}^\infty$ and~$\{1\}^{\infty}$ in the
group~$\textbf{Z}_2$. Both the groups~$\textbf{Z}_2$
and~$\prod_1^{\infty} ({\mathbb Z}/2)$ may be equipped with
Haar measures, and become metrically isomorphic as measure
spaces with those partitions. We will prove that the orbit
partition of the Morse automorphism in its adic realization
also coincides~$\pmod{0}$ with the tail partition, and
consequently there is a reversible time substitution which
brings the Morse automorphism to the odometer. The goal of the
paper is to study the arithmetic properties of that time
substitution, and the so-called Morse arithmetic.

\subsection{General theory of time
substitutions}\label{sectionongeneraltimesubstitutions}

Suppose a countable group~$G$ acts as measure-preserving
transformations on the measure space~$(X, \mu)$. The orbit of
the point~$x\in X$ is the set~$\{gx; g\in G\}$. The partition
of the space~$(X, \mu)$ into the orbits of the group~$G$ is called
the orbit (or trajectory) partition, and denoted by~$\xi(G;X)$
or~$\xi(G)$ if the action and space are fixed. In the case of
the group~$G=\mathbb Z$, we can write~$\xi(T)$, where~$T$ is
the generator of the action of the group~$\mathbb Z$.

Assume that two measure preserving ergodic automorphisms~$T$
and~$M$ of the Lebesgue space~$(X,m)$ have the same orbit
partitions,
\[
\xi(T)=\xi(M)=\xi.
\]
Consider the time-change function~$t(\cdot)$ from above,
defined by the formula:
\[
Mx=T^{t(x)}x,
\]
which exists because of the coincidence of orbit partitions.
The function~$t:X\to\mathbb Z$ is called a \emph{time
substitution} function from the automorphism~$T$ to the
automorphism~$M$ or the ``function of jumps from~$T$ to~$M$''.

Consider also the \emph{complete time change}
function~$(x,k)\mapsto t(x,k)$, defined by
\[
M^k x=T^{t(k,x)}x.
\]
It is easy to express the function~$t(k,x)$ using values of the
function~$t(x)$ on the same orbit as follows. We record the
following formulas for~$t(x,k)$:
\begin{eqnarray*}
t(0,x)&\equiv&0,\\
t(1,x)&=&t(x),\\
t(k,x)&=&\sum_{i=0}^{k-1} t(M^{i} x)\quad\mbox{for }k>0,\mbox{ and}\\
t(k,x)&=&-\sum_{i=0}^{|k|-1}t(M^{i} x)\quad\mbox{for }k<0.
\end{eqnarray*}
For example,
\[
M^{-1}x=T^{-t(M^{-1}x)} x
\]
and
\[
t(-1,x)=-t(M^{-1}x).
\]
It is clear from the definition that~$M$ and~$T$ have the same
orbit partition if the set of values of function~$t(x,\cdot)$
when~$k$ run over~$\mathbb Z$ coincides with~$\mathbb Z$ for
almost every~$x$, or in other words~$\{t(M^k x)\mid k\in
\mathbb Z\}=\mathbb Z$ for a.a.~$x$.

It is natural to look at the function~$t(\cdot,\cdot)$ as a map
from~$X$ to the group of all permutations of the integers --
${\frak S}^{\mathbb Z}$. We denote this map by~$\Theta_M$, so
\[
\Theta_M: x \mapsto \{k \mapsto t(x,k)\}\in {\frak S}^{\mathbb
Z}.
\]
Evidently for almost all~$x$ the bijection~$\Theta(x)$
of~$\mathbb Z$ is an element of~${\frak S}^{\mathbb Z}$.

We will call the permutation~$\{k\rightarrow
t(k,x)\}=\Theta(x)\in {\frak S}^{\mathbb Z}$ the \emph{time
substitution from~$T$ to~$M$ at the point~$x$}, or for brevity
(if it is clear from the context what the maps~$T$ and~$M$ are)
-- the \emph{substitution at the point~$x$}. It is important
not to confuse this permutation with the time
substitution~$t(x)$, which was defined above. The formulas
above give the links between these two objects.

Define a subgroup~${\frak S}_0$ of the group~${\frak
S}^{\mathbb Z}$ by
\[
{\frak S}_0^{\mathbb Z}\subset {\frak S}^{\mathbb Z}: {\frak
S}_0^{\mathbb Z}=\{g\in {\frak S}^{\mathbb Z}\mid g(0)=0\}.
\]
Since~$t(0,x)\equiv0$, the image of~$X$ under the
map~$\Theta_M$ lies in the subgroup~${\frak S}_0^{\mathbb Z}$.
Thus each point~$x \in X$ is sent to an infinite
permutation~$\Theta(x):k\mapsto t(x, k)\in \mathbb Z$, an
element of the group~${\frak S}^{\mathbb Z}$ of all infinite
permutations of~$\mathbb Z$.

Define an action of the group~$\mathbb Z$ on~${\frak
S}_0^{\mathbb Z}$ by letting the generator~$D$ of~$\mathbb Z$
act according to the formula
\[
D(g)(k)=g(k+1)-g(1),
\]
($D$ is the ``modified shift''). The map~$D$ is a bijection of
the subgroup~${\frak S}_0^{\mathbb Z}$ onto itself (but is not
a group isomorphism). The identity permutation~$Id$ and the
reflection~$-Id$ are both fixed points of the map~$D$. We have
defined an action of~$\mathbb Z$, that is a dynamical system,
on a subset of
\[
{\frak S}_0^{\mathbb Z}\subset {\frak S}^{\mathbb Z}\mid{\frak
S}_0^{\mathbb Z}=\{g\in {\frak S}^{\mathbb Z}\mid g(0)=0\}.
\]

We have defined a map from our system~$(X,m,M)$ to the
group~${\frak S}_0^{\mathbb Z}$ of all permutations of the
integers with zero as a fixed point,
\[
\Theta_M:X \rightarrow {\frak S}_0^{\mathbb Z},
\]
defined by
\[
X\ni x \mapsto \Theta_M(x) \equiv \{k \mapsto t(k,x)\mid k\in \mathbb Z \}.
\]
The image of the measure~$m$ on~$X$ under~$\Theta_M$ is a
measure~$\Theta_M m=\mu_M$ on the group~${\frak S}_0$. We
define a new dynamical system using this as follows.

\begin{theorem}
The map~$\Theta_M$ is a homomorphism of the ergodic
transformation~$M$ of the space~$(X,m)$ to the ergodic
transformation~$D$ on the space~$({\frak S}^{\mathbb
Z}_0,\mu_M)$.

If for almost all pairs~$(x,y)$ with~$x,y\in X$ we
have~$t(\cdot,x)\ne t(\cdot,y)$, then~$\Theta_M$ is an
isomorphism onto the image of the triple~$(X,m,M)$ and
\[
({\frak S}^{\mathbb Z}_0,\mu_M, D).
\]
\end{theorem}

Thus we obtain (in the non-degenerate cases) a new model for
the study of the automorphism~$M$ (relative to the
automorphism~$T$). The measure~$\mu_M$ is interesting itself as
a natural example of a measure on the group of all permutations
of the integers (more generally, for a~$G$-action, a measure on
the group of permutations of~$G$).

\subsection{The space of linear orderings of~$\mathbb Z$}

In what follows it is more convenient to think of the
time-substitution in terms of linear orders on~$\mathbb Z$;
more precisely \emph{re-orderings} of the integers~$\mathbb Z$.
The image of the usual order on the group~$\mathbb Z$ under the
time substitution~$\Theta_M(x)$ can be represented as follows:
\[
\cdots\negthinspace\rightarrow\negthinspace
t(-2,x)\negthinspace\rightarrow\negthinspace t(-1,x)
\negthinspace\rightarrow\negthinspace
t(0,x)\negthinspace=\negthinspace0
\negthinspace\rightarrow\negthinspace
t(1,x)\negthinspace=\negthinspace t(x)
\negthinspace\rightarrow\negthinspace
t(2,x)
\rightarrow
\negthinspace\cdots.
\]
This is a \emph{linear order of type~$\mathbb Z$ on the
group~$\mathbb Z$}, depending on~$x$. We will use the following
notation from combinatorics:~$a\lessdot b$ means that~$b$ is
the immediate successor of~$a$, or~$b$ immediately follows~$a$
with no intermediate elements. Using this, we can write
\[
\cdots t(-2,x) \lessdot t(-1,x)\lessdot t(0,x)=0 \lessdot t(x)=t(1,x) \lessdot t(2,x) \cdots.
\]
Consider the space~$\mathcal T$ of all linear orders
on~$\mathbb Z$ of type~$\mathbb Z$, and equip that space with
the natural weak topology and corresponding Borel structure.
The group of shifts~$\{S^n\mid n \in \mathbb Z\}$ acts
on~$\mathcal T$, and we can consider the shift-invariant Borel
measures on the space~$\mathcal T$. The \emph{set of
triples}~$({\mathcal T}, S,\mu)$ is once again a universal
model in ergodic theory, as was the previous model of the group
of all permutations. The formulas of the following lemma and
its corollary show how to express this action.

\begin{lemma}\label{lemmaA}
For~$k\geq 0,$
\[
t(k,Mx)=t(k+1,x)-t(x)\quad(=\sum_{i=1}^k t(M^i
x)),
\]
for~$k<0$, and
\[
t(k, Mx)=t(k-1,x)-t(x)\quad(=-\sum_{i=1}^{|k|} t(M^{-i}
x)).
\]
\end{lemma}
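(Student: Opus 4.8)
The plan is to obtain both displayed identities as special cases of the \emph{cocycle identity}
\[
t(k+l,x)=t\bigl(k,M^l x\bigr)+t(l,x),\qquad k,l\in\mathbb{Z},
\]
which is the dynamical content of the defining relation $M^k x=T^{t(k,x)}x$. To prove this identity I would write $M^{k+l}x=M^k\bigl(M^l x\bigr)$, substitute $M^l x=T^{t(l,x)}x$ and then $M^k y=T^{t(k,y)}y$ at the point $y=M^l x$, so that
\[
M^{k+l}x=T^{t(k,M^l x)}T^{t(l,x)}x=T^{\,t(k,M^l x)+t(l,x)}x,
\]
and compare this with $M^{k+l}x=T^{t(k+l,x)}x$. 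Because $M$ and $T$ are ergodic with the common orbit partition $\xi$, for almost every $x$ the $T$-orbit of $x$ has order type $\mathbb{Z}$, so the map $n\mapsto T^n x$ is injective and the two exponents must coincide; this yields the cocycle identity.

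Specializing to $l=1$ and recalling $t(1,x)=t(x)$ gives, for every $k\in\mathbb{Z}$,
\[
t(k,Mx)=t(k+1,x)-t(x),
\]
which is precisely the coordinatewise form of the intertwining relation $\Theta_M\circ M=D\circ\Theta_M$ from the preceding theorem, since $D(g)(k)=g(k+1)-g(1)$ and $\Theta_M(x)(k)=t(k,x)$. To recover the parenthetical telescoped sums I would substitute the explicit expressions for $t(\cdot,x)$. For $k\ge 0$, inserting $t(k+1,x)=\sum_{i=0}^{k}t(M^i x)$ and cancelling the $i=0$ term $t(M^0x)=t(x)$ leaves exactly $\sum_{i=1}^{k}t(M^i x)$. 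For $k<0$ I would instead expand $t(\cdot,x)$ through its negative-index formula, after which the difference $t(k+1,x)-t(x)$ telescopes to a sum of the values $t(M^{-i}x)$ over the appropriate range, carrying the leading minus sign.

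I expect the genuine mathematical input to be only the cocycle identity together with the almost-everywhere freeness of the $T$-action that licenses cancelling the exponents; everything else is bookkeeping. Accordingly, the one step that needs care is the negative case: there the summation limits and the overall sign in the definition of $t(k,x)$ must be tracked precisely so that the telescoped expression collapses correctly, and it is exactly this sign-and-index matching that accounts for the split of the statement into its two displayed cases.
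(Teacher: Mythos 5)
Your cocycle argument is the right mechanism, and it is exactly what the paper leaves implicit: the lemma is stated there with no proof at all, as an immediate consequence of the defining relation $M^k x=T^{t(k,x)}x$ together with a.e.\ freeness of the $T$-action, which is precisely what your identity $t(k+l,x)=t(k,M^l x)+t(l,x)$ specialized at $l=1$ supplies. Your remark that the resulting formula is the coordinatewise form of $\Theta_M\circ M=D\circ\Theta_M$ is also correct, and it is consistent with the surrounding text (both the Corollary $t(M^k x)=t(k+1,x)-t(k,x)$ and the discussion of orders following the lemma rest on the same identity).

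The gap is exactly at the step you dismissed as bookkeeping. Your (correct) derivation gives $t(k,Mx)=t(k+1,x)-t(x)$ for \emph{every} $k\in\mathbb Z$, while the statement you are asked to prove asserts $t(k,Mx)=t(k-1,x)-t(x)$ for $k<0$. No tracking of summation limits and signs can reconcile these: they differ by $t(k+1,x)-t(k-1,x)=t(M^k x)+t(M^{k-1}x)$, which is not identically zero. The printed negative case is an off-by-one misprint, and so is its parenthetical. Expanding with the correct negative-index formula $t(k,x)=-\sum_{i=1}^{|k|}t(M^{-i}x)$ for $k<0$ (the paper's own display $t(k,x)=-\sum_{i=0}^{|k|-1}t(M^{i}x)$ is likewise misprinted, since it contradicts the example $t(-1,x)=-t(M^{-1}x)$ given directly below it), your uniform formula yields
\[
t(k,Mx)=t(k+1,x)-t(x)=-\sum_{i=0}^{|k|-1}t(M^{-i}x),\qquad k<0,
\]
and not $-\sum_{i=1}^{|k|}t(M^{-i}x)$; the check at $k=-1$ is $t(-1,Mx)=-t\bigl(M^{-1}Mx\bigr)=-t(x)$, which matches this and refutes the printed version. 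So the two-case split in the statement is not, as you suggest, an artifact of sign conventions for negative $k$: the uniform formula holds for all $k$, and the second displayed case is false as printed. Had you actually carried out the negative-case computation rather than asserting it ``collapses correctly,'' you would have discovered this. As it stands, your proposal is a correct proof of the corrected statement, but it cannot be completed into a proof of the statement as written, and a referee-quality write-up should have flagged the discrepancy explicitly.
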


\begin{corollary}
$t(M^k x)=t(k+1,x)-t(k,x).$
\end{corollary}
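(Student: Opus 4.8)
The plan is to obtain the corollary directly from the defining relation~$M^{k}x = T^{t(k,x)}x$, using only that the odometer~$T$ acts freely, i.e. has infinite orbits~$\pmod 0$, so that~$T^{a}x=T^{b}x$ forces~$a=b$ for a.e.~$x$. From this viewpoint the corollary is nothing but the additive cocycle identity satisfied by the complete time change~$t(\cdot,\cdot)$, and it needs no splitting into cases according to the sign of~$k$.

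First I would factor the iterate as~$M^{k+1}=M\circ M^{k}$ and evaluate at~$x$ in two ways. By the definition of~$t(\cdot,\cdot)$ one has~$M^{k+1}x = T^{t(k+1,x)}x$. On the other hand, applying the relation~$My=T^{t(y)}y$ at~$y=M^{k}x$ and then~$M^{k}x=T^{t(k,x)}x$,
\[
M^{k+1}x = M\bigl(M^{k}x\bigr) = T^{t(M^{k}x)}\bigl(M^{k}x\bigr) = T^{t(M^{k}x)}T^{t(k,x)}x = T^{\,t(M^{k}x)+t(k,x)}x,
\]
the last equality using that powers of the single transformation~$T$ add. Equating the two expressions for~$M^{k+1}x$ and cancelling~$T$ by freeness gives~$t(k+1,x)=t(M^{k}x)+t(k,x)$, and rearranging yields~$t(M^{k}x)=t(k+1,x)-t(k,x)$, as claimed.

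The only delicate point—the sole, and mild, obstacle—is the cancellation~$T^{a}x=T^{b}x\Rightarrow a=b$, that is the freeness of the~$T$-action; this holds~$\pmod 0$ because~$T$ is aperiodic (indeed on~$\textbf{Z}_{2}$ one has~$T^{n}x=x+n=x$ only for~$n=0$), so the null set of exceptional points may be discarded throughout. As a cross-check one may instead read the identity off the explicit formulas for~$t(k,x)$ recorded above: for~$k>0$ the difference~$t(k+1,x)-t(k,x)=\sum_{i=0}^{k}t(M^{i}x)-\sum_{i=0}^{k-1}t(M^{i}x)$ telescopes to~$t(M^{k}x)$, the case~$k=0$ is immediate from~$t(0,x)\equiv 0$ and~$t(1,x)=t(x)$, and the case~$k<0$ follows from the corresponding sum formula by the same cancellation. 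The very same cocycle structure also underlies Lemma~\ref{lemmaA}.
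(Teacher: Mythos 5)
Your proof is correct. The paper itself offers no argument for this corollary: it is presented as an immediate consequence of the recorded sum formulas (equivalently, of Lemma~\ref{lemmaA}), i.e.\ by telescoping $t(k+1,x)-t(k,x)=\sum_{i=0}^{k}t(M^{i}x)-\sum_{i=0}^{k-1}t(M^{i}x)=t(M^{k}x)$ for $k\ge 0$, with a separate computation for $k<0$. Your primary argument instead goes back to the defining relation $M^{k}x=T^{t(k,x)}x$, factors $M^{k+1}=M\circ M^{k}$, and cancels powers of $T$ by freeness (aperiodicity a.e., which is automatic for an ergodic transformation of a nonatomic space, and holds everywhere for the odometer since $\mathbf{Z}_{2}$ is torsion-free). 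This is the same cocycle identity that underlies the paper's formulas, but your presentation has two advantages: it is uniform in $k$, avoiding the sign case analysis entirely, and it does not lean on the paper's printed formula for $k<0$, which contains a typographical slip (it reads $-\sum_{i=0}^{|k|-1}t(M^{i}x)$ where it should involve $t(M^{-i}x)$, $1\le i\le|k|$, as the paper's own example $t(-1,x)=-t(M^{-1}x)$ shows). Your cross-check via the sum formulas is exactly the paper's implicit route; just note that for $k<0$ it only works with the corrected formula. You also rightly observe that freeness is not an extra hypothesis but is already needed for $t(k,x)$ to be well defined.
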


Lemma~\ref{lemmaA} shows that the order
\[
t(k,x)\lessdot t(k+1,x), k\in \mathbb Z,
\]
is sent to the order
\[
t(k,Mx)\lessdot t(k+1,Mx)(=t(k+1,x)-t(x)), k\in \mathbb Z,
\]
or simply
\[
t(k,x)-t(x) \lessdot t(k+1,x)-t(x)),k\in \mathbb Z.
\]

This means that the new linear order induced by the
automorphism~$M$ is simply the shift (translation) of the
previous order. So the induced action of the group~$\mathbb Z$
on the space~$\mathcal T$ is defined independently of the
automorphism~$M$ (recall that the function~$t(\cdot)$ takes on
all integer values). Thus the \emph{action of $\mathbb Z$ on
the space of linear orders~$\mathcal T$ is simply the action by
the shifts}. The usual order~$k\lessdot k+1,  k\in \mathbb Z$
and the opposite order~$k\lessdot k-1,  k\in \mathbb Z$ are
fixed points of this action.

We are interested in the \emph{shift-invariant measures
on~$\mathcal T$}. For a given automorphism~$T$ we can identify
each automorphism~$M$ which has the same orbit partition as~$T$
with the shift invariant measure on the space~$\mathcal T$.
This measure is concentrated on the set of re-orderings of the
orbits of~$T$.

Thus all ergodic triples~$(X,M,m)$ up to isomorphism can be
realized as the triple~$(\mathcal T, S, \mu)$, where~$S$ is the
shift on the space~$\mathcal T$ and~$\mu$ is a shift invariant
measure on the~$\mathcal T$. Remember that this isomorphism
depends on the automorphism~$T$, so this model as before can
only give relative invariants of~$M$ with respect to~$T$.

\begin{problem}
How does the class of invariant measures depend on the
automorphism~$T$?
\end{problem}

\subsection{The notion of locally finite permutation and locally finite
ordering}\label{sectiononlocallyfiniteorderings}

Now we define a special class of permutations (or bijections)
of the group~$\mathbb Z$ (and, more generally, of countable
amenable groups) and in parallel the corresponding special
class of linear orders on the group~$\mathbb Z$.

\begin{definition}
Locally finite bijections are defined as follows.
\begin{enumerate}
\item A bijection~$L:\mathbb Z\to\mathbb Z$ is called
    \emph{locally finite} (LFB) if there exists an increasing
    sequence of intervals~$I_1\subset I_2 \subset \cdots$,
    with
    \[
    \bigcup_{n\ge1}I_n=\mathbb Z,
\]
    with the property that for each~$\epsilon>0$ there
    exists~$N=N(\epsilon)$ such that the
    intervals~$I_n,n>N$ are~$L$-invariant up
    to~$\epsilon>0$\footnote{This means that~$|I_n\Delta
    L(I_n)|<\epsilon|I_n|$.}.
\item More generally, let~$G$ be an arbitrary countable
    amenable group, and~$L:G\to G$ be a bijection of $G$
    onto itself. Then~$L$ is called \emph{locally finite}
    if there is an increasing exhaustive sequence of finite
    sets~$\{I_n\}, \bigcup_n I_n=G$, where each~$I_n$ is
    a~$\frac{1}{n}$-F{\o}lner set, (for some fixed choice
    of generators of~$G$), with the property that for
    each~$\epsilon>0$ there exists some~$N=N(\epsilon)$
    such that for all~$n>N$ the sets~$I_n$
    are~$L$-invariant up to~$\epsilon$.
\end{enumerate}
\end{definition}

The property of being a locally finite bijection does not
depend on the choice of generators for the group, and the class
of locally finite bijections generates a subgroup of the group
of all bijections on the group. To my knowledge nothing is
known about this subgroup, and it would be interesting to
investigate the algebraic structure of this subgroup.

It is easy to give examples of orderings which are not locally
finite, but the locally finite class is most natural in
analysis. We use this notion in the following paragraphs. The
parallel definition for linear orderings on the group~$\mathbb
Z$ is the following.

\begin{definition}
A linear ordering~$\prec$ on the group~$\mathbb Z$  (of
type~$\mathbb Z$) is called \emph{locally finite}(LFO) if there
exists an increasing sequence of finite intervals~$I_n=(a_n,
b_n), \bigcup I_n=\mathbb Z$, with a linear order~$\prec_n$ on
each~$I_n$ such that the following holds.
\begin{enumerate}
\item The linear order~$(\mathbb Z,\prec)$ is a limit
    (stabilization in the natural sense) of~$(I_n,\prec_n)$
    when~$n$ tends to infinity.
\item For each~$\epsilon>0$ there exists~$N=N(\epsilon)$
    with the property that for~$n>N$ the restriction
    of~$\prec_n$ to the interval~$I_m,m<n$ coincides
    with~$\prec_m$ for all elements of~$I_m$
    except~$\epsilon\cdot|I_m|$\footnote{In the sequel we
    will have a stronger condition, that the restriction
    of~$\prec_n$ to~$I_m$ differs from~$\prec_m$ at most on
    two points of~$I_m$.}.
\end{enumerate}
\end{definition}

For the group~$\mathbb Z$ the notion of locally finite ordering
is consistent with the definition of locally finite bijection
above. Namely, it is easy to check from the definition that
if~$L$ is a locally finite bijection then the~$L$-image of~$>$
(that is, the order defined by~$n\prec m \iff L^{-1}n>L^{-1}m$)
is locally finite, and {\it vice versa}.

It is not difficult to prove (see \cite{V2}) the following fact:
\begin{theorem}
Let $S$ is ergodic automorphism which has the same orbit partition as
odometer T. Then there exists time substitution of almost all orbits of $T$
to the orbits of $S$ which is locally finite for almost all orbits.
The growth of the lengths of the corresponding intervals $I_n$ depends on so called
scale of automorphism $S$.
\end{theorem}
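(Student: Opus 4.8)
The plan is to argue one orbit at a time. Fix a generic point $x$ and identify its orbit with $\mathbb Z$ through $m\mapsto T^m x$, so that the integer $m$ is the $T$-position of the point $T^m x$; under this identification the time substitution $\Theta_S(x)\colon k\mapsto t(k,x)$ carries the $S$-time $k$ of a point to its $T$-position. By the equivalence of LFO and LFB recorded above it suffices to exhibit, for a.e.\ $x$, an increasing exhausting sequence of intervals $I_n\subset\mathbb Z$ that are almost $\Theta_S(x)$-invariant, i.e.\ $|I_n\,\Delta\,\Theta_S(x)(I_n)|<\epsilon|I_n|$ for all large $n$. The first thing I would record is an exact reformulation of this F{\o}lner condition. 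Writing $E^T_n=\{T^m x:m\in I_n\}$ for the $T$-orbit segment indexed by $I_n$ and $E^S_n=\{S^k x:k\in I_n\}$ for the $S$-orbit segment indexed by the \emph{same} window $I_n$, and using that $m\mapsto T^mx$ is a bijection of $\mathbb Z$ onto the orbit, one checks directly that
\[
|I_n\,\Delta\,\Theta_S(x)(I_n)|=|E^T_n\,\Delta\,E^S_n|.
\]
Thus the whole statement reduces to producing windows $I_n$ over which the $T$- and $S$-orbit segments through $x$ nearly coincide as sets of points.

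To produce such windows I would play off the two adic (tower) structures. The odometer carries its canonical dyadic one: the tail sub-relations cut each orbit into consecutive $T$-intervals of length $2^n$. Since $S$ is ergodic with the same hyper-finite orbit partition, it can be put in Bratteli--Vershik form on the very same orbit relation, yielding an increasing exhausting family of finite sub-relations whose classes are $S$-towers of heights $q_n$; the sequence $\{q_n\}$ is by definition the scale of $S$. Both filtrations increase to one and the same orbit relation and are therefore mutually cofinal mod $0$. The heart of the construction is to use this cofinality to align the two systems: to choose, on a.e.\ orbit and along a suitable subsequence of levels, a window $I_n$ of length $\asymp q_n$ for which the $S$-segment over $I_n$ agrees with the $T$-segment over $I_n$ outside a set that is negligible relative to $|I_n|$. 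Because $|I_n|\asymp q_n$, this simultaneously delivers $\bigcup_n I_n=\mathbb Z$ and the final clause, that the growth of the interval lengths is governed by the scale of $S$.

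Granting such aligned windows, the remaining estimate is elementary boundary bookkeeping. Over a common window the two transformations thread through the same finite pool of points as two finite chains, $T$ in the natural order and $S$ in the tower order, so a point can be lost from $E^T_n\,\Delta\,E^S_n$ only near the two ends --- at the top level of an $S$-tower, which $S$ carries into the next tower, and at the dyadic carry of the odometer at the edge of the window. In the most favourable situation this is exactly the at-most-two-points phenomenon of the footnote, and in general the discrepancy is $o(|I_n|)$. Summing these contributions and removing the exceptional orbits by a Borel--Cantelli argument along a sufficiently lacunary subsequence of levels (the admissible rate being dictated by the decay of the tower-error measures, hence once more by the scale) upgrades the bound to hold for all large $n$ and for almost every orbit. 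This yields conditions (1)--(2) of local finiteness, so $\Theta_S(x)$ is a locally finite bijection for a.e.\ $x$.

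The step I expect to be the main obstacle is the alignment in the second paragraph, equivalently the interval-matching $|E^T_n\,\Delta\,E^S_n|=o(|I_n|)$ of the first: one must choose the $S$-tower system so that its fibres sit inside the dyadic $T$-structure with controlled overshoot, so that the two orbit segments over a \emph{common} window of length $\asymp q_n$ nearly coincide, rather than the $S$-segment drifting away from the $T$-segment at a linear rate. It is precisely here that the amenable (hyper-finite) structure of the common orbit relation and the approximation theorem of \cite{V2} enter, and it is the compatibility between the odometer's dyadic scale and the scale $\{q_n\}$ of $S$ that makes the rate work. Once these matched windows are in hand, the identity of the first paragraph and the boundary estimate of the third finish the proof.
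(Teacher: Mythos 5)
First, for calibration: the paper contains \emph{no} proof of this theorem --- it is stated with the remark that it ``is not difficult to prove'' together with a pointer to \cite{V2}, where the scale of an automorphism is introduced. So there is no in-paper argument to compare yours against, and your attempt must stand on its own. Your opening reduction is correct and genuinely useful: since $m\mapsto T^m x$ is a bijection of $\mathbb Z$ onto the orbit of a generic point $x$, one does have $|I\,\Delta\,\Theta_S(x)(I)|=|E^T_I\,\Delta\,E^S_I|$, so local finiteness is exactly the assertion that the $T$-segment and the $S$-segment over a \emph{common} time window nearly coincide as subsets of the orbit.

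The gap is that the step which would establish this --- your ``alignment'' --- is not an argument but a deferral, and it is the entire content of the theorem. What hyperfiniteness gives you for free is mutual cofinality of the dyadic filtration and of an $S$-tower filtration: each class of one is, for suitable later indices and off a set of small measure, contained in a class of the other. But containment of classes says nothing about \emph{positions}. Concretely, let $D$ be a common block, i.e. a $T$-interval of length $L$ that is also (almost) an $S$-interval, and let $m_0$, resp. $k_0$, denote the $T$-position, resp. the $S$-position, of $x$ inside $D$. The window recovering $D$ from $x$ in $T$-time is $[-m_0,\,L-1-m_0]$, while in $S$-time it is $[-k_0,\,L-1-k_0]$; over a common window the two segments differ by roughly $2|k_0-m_0|$ points. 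So you need common blocks in which $|k_0-m_0|=o(L)$, i.e. the two orders must agree \emph{in bulk} on $D$, not merely determine the same partition element; cofinality gives no control whatsoever on $k_0-m_0$, since an $S$-class can be a union of dyadic $T$-blocks scattered arbitrarily inside a larger $T$-block, in which case no window of comparable length works. You flag precisely this danger (``drifting away at a linear rate'') and then discharge it by invoking hyperfiniteness and \cite{V2} --- which is citing the theorem inside its own proof. A related structural point: the statement says there \emph{exists} a locally finite time substitution, whereas your plan fixes $S$ and its canonical cocycle $\Theta_S$ and tries to show that this given substitution is automatically locally finite; that is a stronger claim, and exactly the one for which the position mismatch has no evident remedy. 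The natural route (and, given the role of the scale from \cite{V2}, visibly the intended one) is the opposite: one \emph{builds} the copy of $S$ on the orbits of $T$ --- equivalently, builds the re-ordering --- by a lacunary/periodic-approximation argument, so that the $S$-towers are unions of consecutive dyadic blocks in prescribed positions and the alignment holds by construction, the admissible tower heights (hence the lengths $|I_n|$) being dictated by the scale. Your final clause about the scale is likewise asserted ($|I_n|\asymp q_n$) rather than derived.
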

Remark that by Dye's theorem and previous assertion each ergodic measure preserving transformation is
isomorphic to another measure preserving transformation which has the same orbit as odometer
and for which time substitution is locally finite for almost all orbits.

For Morse automorphism we will prove much more strong assertion
about time substitution.
\begin{definition}
Suppose that for two measure preserving ergodic transformations $T,M$ with the same orbits, and
the locally finite time substitution from $T$ to $M$ is defined with permutations of each intervals $I_n$
$n=1,2 \dots$ (see definition) which are depended on $n$ but are the same for almost all orbits.
We call such time substitution  as uniformly locally finite -ULFTS.
(This is symmetric relation with respect to $T,M$. In this case we say that $T,M$ are allied
transformations.
\end{definition}
We will call that Morse automorphism and odometer are allied, the time substitution
 has the exponential growth of length of interval and the finite permutation
for each length of the interval  $I_n$ is so called the Morse permutation
which is defined for each interval of length $2^n,n=1,2\dots$ and is the same for almost all orbits.

\section{The odometer and the Morse automorphism}

\subsection{Orbit partition of the odometer}

Assume that the space~$X$ is~$\textbf{Z}_2$, the integer dyadic
numbers with Haar measure~$m$ and~$T$ is the transformation of
adding unity in the group~$\textbf{Z}_2$,
\[
T:x\rightarrow x+1.
\]
Notice that as a topological space (with pro-finite topology),
and as a measure space (with Haar measure), the
group~$\textbf{Z}_2$ is the same as the space of
group~$\prod_1^{\infty} {\mathbb Z}/2$ (with
the~$(\frac12,\frac12)$ Bernoulli measure). Thus we can
consider the natural action of the
group~$\sum_1^{\infty}{\mathbb Z}/2$ on~$\textbf{Z}_2$.

Let us consider the element~$x \in \textbf{Z}_2$ as a
sequence~$(x_1, x_2,\dots )$ with~$x=0,1$, of the dyadic
decomposition. Two sequences~$\{x_k\}, \{y_k\}$ are called
\emph{cofinal} if~$x_k=y_k$ for all~$k>k_0(x,y)$. Finite
sequences (that is, sequences which are cofinal
with~$(0)^{\infty}$ correspond to natural rational integers,
sequences which are cofinal to~$(1)^{\infty}$ correspond to
negative rational integers. Each equivalence class of cofinal
sequences is an orbit of the action of the
group~$\sum_1^{\infty}{\mathbb Z}/2$.

\begin{lemma}\begin{enumerate}
\item The orbit partition of odometer~$T$ is the partition into
    the cosets of the subgroup of integer rational numbers~${\mathbb Z}\subset\textbf{Z}_2$,
    and in terms of dyadic decompostion:
\item The orbit partition~$\xi(T)$ of the odometer~$T$ on the
    set~$\textbf{Z}_2\setminus \mathbb Z$ coincides with the partition into
    cofinal classes (or with the orbit partition of the group~$\sum_1^{\infty}{\mathbb Z}/2$.
    So these two partitions are the same~$\pmod{0}$:
    \[
    \xi(T)=\xi(\sum_1^{\infty} {\mathbb Z}/2) \quad \mbox{mod 0}.
    \]
    The integers~$\mathbb Z$ generate one~$T$-orbit, but decompose into
    two classes of cofinality -- cofinal with~$(0)^{\infty}$ and~$(1)^{\infty}$.
\end{enumerate}
\end{lemma}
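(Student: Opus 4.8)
The plan is to prove the two assertions separately, reducing everything to the propagation of carries under $2$-adic addition, and to isolate the countable exceptional set at the very end. For part (1), the odometer orbit of a point $x$ is by definition $\{T^n x : n \in \mathbb{Z}\}$, and since $T^n x = x + n$ in the additive group $\textbf{Z}_2$, this orbit is exactly the coset $x + \mathbb{Z}$ of the subring $\mathbb{Z} \subset \textbf{Z}_2$. Hence $\xi(T)$ is the partition of $\textbf{Z}_2$ into cosets of $\mathbb{Z}$, with no further work required.

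For part (2) I would first record the arithmetic of the two reference partitions. Writing $x = (x_1, x_2, \dots)$ for the dyadic expansion, two points $x,y$ lie in the same cofinality class (equivalently, the same orbit of the coordinatewise action of $\sum_1^\infty \mathbb{Z}/2$) precisely when $x_k = y_k$ for all large $k$. In that case the value $y - x = \sum_{k=1}^{k_0}(y_k - x_k)2^{k-1}$ is a finite sum and therefore an ordinary integer, so $y \in x + \mathbb{Z}$. This already shows, with no exceptions, that every cofinality class is contained in a single $T$-orbit.

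The substantive direction is the reverse inclusion, and the hard part will be controlling how far a carry propagates. The plan is to show that whenever $x \notin \mathbb{Z}$, each translate $x + n$, $n \in \mathbb{Z}$, is cofinal with $x$. The key observation is that $x \notin \mathbb{Z}$ is equivalent to $x$ having infinitely many $0$'s \emph{and} infinitely many $1$'s: a point with only finitely many $1$'s is a nonnegative integer (cofinal with $(0)^\infty$), and one with only finitely many $0$'s is a negative integer (cofinal with $(1)^\infty$). Adding $1$ to such an $x$ flips its initial block of $1$'s to $0$'s and turns the first $0$ into a $1$, so the carry is absorbed at that first $0$ and only finitely many coordinates change; as infinitely many $0$'s survive, one iterates to add any positive integer while keeping the change finite. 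Symmetrically, subtracting $1$ borrows only as far as the first $1$, and infinitely many $1$'s survive, so any negative integer may be subtracted changing only finitely many coordinates. Thus $x + n$ agrees with $x$ in all but finitely many places, i.e. the two are cofinal, and combining this with the previous paragraph yields $\xi(T) = \xi(\sum_1^\infty \mathbb{Z}/2)$ on all of $\textbf{Z}_2 \setminus \mathbb{Z}$.

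Finally I would dispose of the exceptional set $\mathbb{Z}$. Here the single $T$-orbit $0 + \mathbb{Z} = \mathbb{Z}$ breaks under cofinality into exactly two classes, the nonnegative integers (cofinal with $(0)^\infty$) and the negative integers (cofinal with $(1)^\infty$), precisely because the carry/borrow argument just given fails when one digit value occurs only finitely often: adding $1$ to $-1 = (1)^\infty$ changes every coordinate. Since $\mathbb{Z}$ is countable it is $m$-null, so the discrepancy is confined to a set of measure zero and the two partitions coincide $\pmod{0}$, which is the stated identity $\xi(T) = \xi(\sum_1^\infty \mathbb{Z}/2) \pmod{0}$.
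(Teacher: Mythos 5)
Your proof is correct and takes essentially the same route as the paper: carry/borrow propagation shows that translates $x+n$ of a non-integer point $x$ change only finitely many dyadic digits, the finite difference $y-x\in\mathbb{Z}$ of cofinal points gives the converse inclusion, and $\mathbb{Z}$ is the single countable exceptional $T$-orbit splitting into the two cofinality classes of $(0)^\infty$ and $(1)^\infty$. If anything, your version is more careful than the paper's own terse proof, which asserts that $x+n$ and $x$ are cofinal for all $n\in\mathbb{Z}_+$ without noting (as you do, via the equivalence of $x\notin\mathbb{Z}$ with having infinitely many $0$'s and infinitely many $1$'s) that this fails precisely on the integer orbit.
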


\begin{proof}
If~$n\in{\mathbb Z}_+$ then evidently~$T^nx=x+n$ and~$x$ are
cofinal; if~$x\ne 0$, then~$x-1$ and~$x$ are cofinal.
Consequently, if~$x\in\mathbb Z$ and~$n\in\mathbb Z$,
then~$x+z$ and~$x$ are cofinal. If~$x,y$ are cofinal then there
exists~${z\in\mathbb Z}_+$ such that either~$x+z=y$ or~$y+z=x$.
The orbit~$\mathbb Z$ consists of two classes of cofinal
sequences -- one cofinal with~$0^{\infty}$, and one
with~$(1)^{\infty}$.
\end{proof}

We can identify~$T$-orbits of the generic point~$x$ of the
group~$\textbf{Z}_2$ with the group~$\sum_1^{\infty}{\mathbb
Z}/2$, so it provides a linear order of type~$\mathbb Z$ on the
group~$\sum_1^{\infty}{\mathbb Z}/2$.

\begin{example}
An interesting example of this nontrivial linear order is to
use the point~$x=-\frac{1}{3}\in\textbf{Z}_2$, which has dyadic
decompostion
\[
-\textstyle\frac{1}{3}=(10)^{\infty}
\]
(see below). Its~$T$-orbit is the set~$\{\frac{1}{3}- k\mid
k\in \mathbb Z\}$. On the other hand, the orbit is the class of
all sequences cofinal with~$(10)^{\infty}$, so it is possible
to order by type~$\mathbb Z$ this orbit, or {\it vice versa} to
identify the group~$\sum {\mathbb Z}/2$ which parameterizes
that class in the natural sense, with the group~$\mathbb Z$.
The reader can do this easily.
\end{example}

\begin{example}
Notice that the group~$S_{\infty}$ also acts on the
group
\[
\textbf{Z}_2\simeq\prod_1^{\infty}{\mathbb Z}/2
\]
as group permutations of the coordinates. However its orbit
partition is finer than the partition into classes of
cofinality, but does not coincide with it~$\pmod{0}$. The
so-called Pascal automorphism~\cite{V1} also acts on the group,
and has the same orbit partition as the action of~$S_{\infty}$-
infinite symmetric group.
\end{example}

If a measure-preserving transformation~$M$ of~$\textbf{Z}_2$
has the same orbit partition as the odometer~$T$, then our
formulas for~$M$ simplify. The function~$X \ni x \mapsto t(x)
\in \mathbb Z$ is
\[
Mx=T^{t(x)}x= x+t(x),
\]
(here addition is in the sense of the group~$\textbf{Z}_2$). We
can simplify other formulas. For example,
\[
M^k x=T^{t(k,x)}x=x+t(k,x) \pmod{\textbf{Z}_2},
\]
and so on.

At the same time, we believe that consideration of the Morse
transformation below is a typical consideration in orbit
theory. Namely, we will consider the well-known Morse system
from the point of view of the dyadic odometer (that is, the
relative invariants), and for this we will use the adic
realization of the Morse automorphism, and the corresponding
Morse permutations of the integers.

\subsection{Traditional definition of the Morse
automorphism}\label{sectiontraditionaldefinitionofmorse}

We recall the definition of the Morse automorphism. Consider
the alphabet~$\{0,1\}$ and define the \emph{Morse substitution}
by
\begin{eqnarray*}
\zeta(0)&=&01,\\
\zeta(1)&=&10.
\end{eqnarray*}
This is extended to all words in the alphabet~$\{0,1\}$ by
concatenation. The Thue--Morse sequence is a fixed point of the
substitution,
\[
u=u_0u_1u_2\cdots=\lim_{n\to\infty}
\zeta^n(0)=0110100110010110\ldots.
\]
The sequence~$u$ obeys the rule
\[
u[0,2^{n+1}-1] = u[0,2^n-1]\,  {{\bar u}[0,2^n-1]}\ \ \mbox{for}\ n\ge 0,
\]
where~$u[i,j]=u_i\cdots u_j$ and~${\bar u}[\medspace\dots]$
denotes changing~$0\leftrightarrow 1$
inside~$[\medspace\dots]$.

The sequence~$u$ is non-3-periodic (no words appear~$3$ times
repeatedly), and similarly is non-$4$-periodic and
non-$5$-periodic. The sequence is well known in combinatorics,
logic, and symbolic dynamics.

We next define the Morse automorphism. Consider the set of all
two-sided infinite sequences in the alphabet~$\{0,1\}$, all
sub-words of which are the sub-words of the infinite
Morse--Thue sequence~$u$\footnote{In other words, this
is the shift whose language coincides with the language defined
by~$u$.}; this is the same set as the weak closure of the
two-sided shifts of the Morse--Thue sequence~$u$. This is a
shift-invariant compact subset (in the weak topology)~$\mathcal
M$ of the space~$Y=\{0,1\}^{\mathbb Z}$. The (left) shift
on~$\mathcal M$ is by definition the \emph{Morse automorphism}
of~$M$ as a topological space with action of $\Bbb Z$.

We recall the main properties of the Morse automorphism (see
the earlier papers including~\cite{Ke}, and more recent
treatments~\cite{VS}).

\begin{theorem}
The automorphism~$M$ is a minimal, uniquely ergodic, uniformly
recurrent, transformation of~$\mathcal M$. The spectrum of the
corresponding unitary operator~$U_M$ on~$L^2({\mathcal M},\mu)$
(where~$\mu$ is the unique invariant measure) is mixed, and
contains a discrete part which is dyadic (and the same as for
the~$2$-odometer) and a continuous part which is singular with
respect to Lebesgue measure.
\end{theorem}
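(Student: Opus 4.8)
The plan is to split the statement into its topological-ergodic part (minimality, unique ergodicity, uniform recurrence) and its spectral part, handling the former by the primitivity of the Morse substitution and the latter by the tower structure together with a Riesz-product computation.

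First I would record that $\zeta$ is \emph{primitive}: already $\zeta(0)=01$ and $\zeta(1)=10$ each contain both letters, so the incidence matrix has strictly positive entries. By the standard theory of primitive substitutions this yields at once the three topological-ergodic assertions. Indeed, primitivity implies that every factor $w$ of the fixed point $u$ occurs in every block $\zeta^n(a)$ once $n$ is large, and since $|\zeta^n(a)|=2^n$ does not depend on the letter $a$, the occurrences of $w$ in $u$ have bounded gaps; this is \emph{uniform recurrence} of $u$, equivalent to \emph{minimality} of the shift on $\mathcal M$. The frequency of each $w$ then exists and is approached uniformly in the starting position (Perron--Frobenius applied to the normalized powers of the incidence matrix), and uniform convergence of the Birkhoff averages of all cylinder indicators gives \emph{unique ergodicity}; denote the unique invariant measure by $\mu$.

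For the discrete part of the spectrum I would use that $M$ is a two-point extension of the odometer, as already noted above. The hierarchy of blocks $\zeta^n$ defines a continuous factor map $\pi\colon\mathcal M\to\textbf{Z}_2$ that intertwines $M$ with $T\colon x\mapsto x+1$; pulling back the characters of $\textbf{Z}_2$ produces eigenfunctions of $U_M$ whose eigenvalues are exactly the roots of unity of order $2^n$, $n\ge0$. Hence the discrete spectrum of $M$ is dyadic and coincides with that of the $2$-odometer.

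The delicate point, and the main obstacle, is the continuous part. Passing to the multiplicative coding $v_n=(-1)^{u_n}$, the substitution gives the self-similarity $v_{2n}=v_n$ and $v_{2n+1}=-v_n$, which in turn yields a recursion for the autocorrelation $\gamma(k)=\lim_{N\to\infty}\frac1N\sum_{n<N}v_n v_{n+k}$. This recursion identifies the spectral measure $\sigma$ of the coordinate function, taken on the orthogonal complement of the dyadic eigenfunctions, with a generalized Riesz product $\disp\sigma=\prod_{k\ge0}\bigl(1-\cos 2^{k+1}\pi\theta\bigr)\,d\theta$ (up to normalization), where the frequencies double at each step. Because the defining coefficients do not tend to $0$, this product cannot be absolutely continuous, and the classical analysis of such Morse-type generalized Riesz products shows that $\sigma$ is in fact singular with respect to Lebesgue measure. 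Together with the dyadic discrete part this gives the asserted mixed spectrum. The essential work is the verification of the Riesz-product identity from the autocorrelation recursion and the proof of its singularity; minimality, unique ergodicity and the odometer factor are routine substitution dynamics.
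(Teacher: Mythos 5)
The paper itself contains no proof of this theorem: it is explicitly \emph{recalled} as a known result, with pointers to Keane's paper on generalized Morse sequences and to the Vershik--Solomyak paper, so there is no internal argument to compare yours against. What you propose is the standard proof from substitution dynamics, and it is correct in outline: primitivity of $\zeta$ (the incidence matrix has all entries equal to $1$) gives minimality, uniform recurrence and unique ergodicity of the subshift; the block hierarchy gives the continuous factor map onto the dyadic odometer, whose pulled-back characters produce the dyadic eigenvalues; and the $\pm1$ coding $v_n=(-1)^{u_n}$, via the self-similarity $v_{2n}=v_n$, $v_{2n+1}=-v_n$, identifies the spectral measure on the anti-invariant subspace with the generalized Riesz product $\prod_{k\ge0}\bigl(1-\cos 2\pi 2^k\theta\bigr)$, whose singularity is the classical Kakutani--Queff\'elec result. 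This supplies exactly what the paper delegates to the literature. Two points deserve care if you write it out in full. First, your remark that the Riesz-product coefficients do not tend to zero rules out, via Riemann--Lebesgue, only that $\sigma$ is \emph{purely} absolutely continuous; it does not by itself exclude an absolutely continuous component, so pure singularity genuinely requires the finer classical argument you defer to, and the sentence should not suggest otherwise. Second, to conclude that the discrete spectrum is \emph{exactly} the dyadic group rather than merely containing it, you need to know that the orthocomplement of the pulled-back eigenfunctions (the anti-invariant subspace of the $\mathbb{Z}/2$-extension) has purely continuous spectrum; this does follow from your Riesz-product analysis, since that measure and its relevant translates are continuous, but the reduction of the whole orthocomplement to the cyclic space of the coordinate function is a step worth making explicit rather than implicit.
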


\section{The Morse transformation and its orbit partition}

\subsection{Adic realization of the Morse
transformation}\label{subsectA}

Consider the group~$\textbf{Z}_2$, the compact abelian group of
the~$2$-adic integers under addition. It is clear that as a
measure space this is isomorphic to the countable product of
copies of the group~${\Bbb Z}/2$:~$\textbf{Z}_2\cong\{0,1\}^{\mathbb
N}$. The odometer transformation~$T$ is an \emph{adic
transformation} by definition, with respect to the natural
partial (lexicographic) order on the group of~$2$-adic
integers:
\[
T: x\mapsto x+1.
\]
We want to compare the Morse automorphism and the~$2$-odometer.
For this we use the so called \emph{adic realization} of the
Morse automorphism. The general definition of an adic
transformation requires an~$\mathbb N$-graded graph (in our
case, a graph with the two vertices~$0$ and~$1$ on all levels,
and simple edges between any two vertices of adjacent levels),
and with a linear order on the set of edges which end to a
given vertex. For the odometer, the order is the same for both vertices:
an edge which comes from~$1$ is greater than an edge which come
form~$0$. In order to obtain the Morse transformation we
\emph{change the order of the symbols~$0,1$ depending on the
next symbol},
\[
0 \prec_0 1,\ 1 \prec_1 0
\]
as follows:
\[
\{x_i\} \prec \{y_i\} \ \ \Longleftrightarrow\ \ \exists\,j:\
x_i=y_i \ \mbox{for} \ i>j
\]
and
\[
\{x_j\} \prec_z y_j, \ \mbox{where} \ z=x_{j+1}=y_{j+1}.
\]
This allows us to define a new partial order as the
lexicographic order on the cofinal  set of paths, or points
of~$\textbf{Z}_2$ or set of all sequences of~$0,1$.

We now show what the next point is in the sense of the new
order. In order to define the next sequence to
the sequence~$x=(x_1,x_2,\dots)\in \textbf{Z}_2, \quad x_i=0
, 1,$ we use the following procedure.
\begin{enumerate}
\item Move through the sequence until the first repetition
    of symbols~$x_i=x_{i+1}$, this will be either~$\dots00$
    or~$\dots 11$;
\item Change~$00\rightarrow10$, or~$11 \rightarrow 01$;
\item Substitute all the previous digits to ones,~$11111
    \cdots1$ in the first case, and correspondingly to
zeros~$00000\cdots0$ in the second case:
\begin{eqnarray*}
M((01)^n00*)&=&(1^{2n+1} 0*),\\
M((01)^n1*)&=&(0^{2n}1*),\\
M((10)^n0*)&=&(1^{2n}0*),\\
M((10)^n11*)&=&(0^{2n+1} 1*).
\end{eqnarray*}
\end{enumerate}

For example,
\begin{eqnarray*}
M(00*)&=&10*,\\
M(0100*)&=&1110*,\\
M(011*)&=&001,\\
M(100*)&=&110,\\
M(1011)=0001
\end{eqnarray*}
The rule expressing how to go from~$x$ to~$M(x)$ may also be
expressed as follows. Suppose that in a sequence
\[
\textbf{Z}_2 \ni x =(x_1,x_2,\dots x_r,x_{r+1},\dots), x_i= 0,
 1,i=1 \dots
\]
the first two adjacent coordinates which are equal
are~$x_r=x_{r+1}$.

This means that~$x_1=x_3=x_5=\cdots =x_t \ne x_2=x_4=\dots
x_s$, where either~$t=r,s=r+1$ or~$t=r-1,s=r$ depending on the
parity of~$r$.

If~$x_r=0$, then the sequence~$M(x)$ has the first~$r-1$
coordinates equal to~$1$, and all subsequent coordinates are
not changed:~$(1,1 \dots 1,0,*,* )$. In this case, in the
formula~$M(x)=x+t(x)$, we have~$t(x)>0$.

If~$x_r=1$ then~$M(x)$ has the first~$r-1$ coordinates equal
to~$0$, and all subsequent coordinates are not
changed:~$(0,0,\dots 0,1,*,*\dots)$. In this case, in the
formula~$M(x)=x+t(x)$, we have~$t(x)<0$.

Our algorithm of definition of~$M(x)$ does not work if~$x$ has
finitely many~$0$s or $1$s or equal adjacent coordinates
with~$0$ and~$1$. Define as an \emph{exceptional set
of}~$\textbf{Z}_2$ the countable set of points
of~$\textbf{Z}_2$ whose dyadic decomposition has finitely many
adjacent pairs~$x_r,x_{r+1}$ of the type:~$x_r=x_{r+1}=0$
and~$x_r=x_{r+1}=1$, or (in a more direct description) the set
of sequences which are cofinal
to~$(0)^{\infty},(1)^{\infty},(01)^{\infty},(10)^{\infty}$.

\begin{theorem}\label{theoremA}
The orbit partition of the Morse transformation~$M$ in its adic
realization~$\pmod{0}$ coincides with the orbit partition of
the odometer. More exactly, consider the~$M$-orbit of the
point~$x\in \textbf{Z}_2$ which has infinitely many coordinates
with~$x_r=x_{r+1}$, and infinitely many~$0$s and~$1$s (this
occurs on a set of full measure). Then the orbit of~$x$
coincides with the set of all points which are cofinal
with~$x$. On the set of exceptional points (which has Haar
measure~$0$) the orbit equivalence relation of~$M$ is different
from the relation of cofinality.
\end{theorem}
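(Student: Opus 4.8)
The plan is to prove the two inclusions separately — that the $M$-orbit of a generic point is contained in its cofinality class, and conversely that the whole cofinality class forms a single $M$-orbit — and then to check that the exceptional set is Haar-null and that the two equivalence relations genuinely disagree there. Since the earlier Lemma identifies the odometer orbit of a generic point with its class of cofinal sequences, proving the same identity for $M$ gives $\xi(M)=\xi(T)\pmod 0$.

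The first inclusion is immediate from the explicit description of $M$. In each of the four cases, e.g.
\[
M((01)^n00\,\ast)=(1^{2n+1}0\,\ast),\qquad M((01)^n1\,\ast)=(0^{2n}1\,\ast),
\]
the tail $\ast$ is left untouched and only a finite initial block is altered; the same holds for $M^{-1}$. Hence $M^kx$ is cofinal with $x$ for every $k$, so the orbit is contained in the cofinality class $C$ of $x$. For the measure statement, the exceptional set is the union of the four cofinality classes of $(0)^\infty,(1)^\infty,(01)^\infty,(10)^\infty$; each class is countable, so their union is a countable, hence Haar-null, set. Equivalently, the events $\{x_{2i}=x_{2i+1}\}$ are independent with probability $\tfrac12$, so by Borel--Cantelli almost every $x$ has infinitely many adjacent repetitions, and likewise infinitely many $0$s and $1$s.

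The heart of the argument is the reverse inclusion $C\subseteq\{M^kx\}$. Recall that $M$ is by construction the immediate $\prec$-successor, where $\prec$ compares two cofinal sequences at the \emph{largest} coordinate at which they differ. Fix $j$ and set
\[
V_j=\{z\in C:\ z_i=x_i\ \text{for all}\ i>j\},
\]
the set of $2^j$ sequences sharing the tail of $x$ beyond $j$. I would first show that $V_j$ is $\prec$-convex in $C$: if $w\in C\setminus V_j$ and $p^\ast>j$ is the largest index with $w_{p^\ast}\ne x_{p^\ast}$, then for every $z\in V_j$ the sequences $w,z$ agree above $p^\ast$ and differ at $p^\ast$, while $z_{p^\ast}=x_{p^\ast}$ and $z_{p^\ast+1}=x_{p^\ast+1}$; hence the comparison of $w$ with $z$ reduces to $w_{p^\ast}\prec_{x_{p^\ast+1}}x_{p^\ast}$, which is the same for every $z\in V_j$. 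Thus each element of $C$ outside $V_j$ lies entirely above or entirely below $V_j$, so $V_j$ is a finite $\prec$-interval of $C$. The genericity hypothesis enters precisely here: since $x$ has infinitely many repetitions beyond $j$, every $z\in V_j$ also has one, so $M(z)$ is defined on all of $V_j$; the extra condition of infinitely many $0$s and $1$s removes the $\prec$-endpoints of $C$, so $(C,\prec)$ has order type $\mathbb Z$ rather than $\mathbb N$. Because $M$ is the successor map and $V_j$ is convex, $M$ sends each non-top element of $V_j$ to its successor inside $V_j$ and $M^{-1}$ sends each non-bottom element to its predecessor inside $V_j$. Consequently, given $y$ cofinal with $x$, choosing $j$ with $x_i=y_i$ for $i>j$ places both $x,y$ in the same interval $V_j$, and traversing $V_j$ by successors or predecessors yields $y=M^kx$ with $|k|<2^j$, never leaving $V_j$ and never requiring the bottom or top of $V_j$. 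This is the step I expect to be the main obstacle, since everything rests on the convexity of the tail-cylinders $V_j$ and on verifying that the finite traversal stays internal — which is exactly what the conditions on $0$s, $1$s and repetitions guarantee by excluding the four degenerate tails.

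Finally, on the exceptional set the relations differ: for $x$ cofinal with $(01)^\infty$ or $(10)^\infty$ there is no adjacent repetition beyond some point, so the successor algorithm has no first repetition on which to act and $M$ is not defined, whence the $M$-orbit cannot exhaust the infinite cofinality class; and the classes of $(0)^\infty,(1)^\infty$ carry a $\prec$-order of type $\mathbb N$ (these points have no $\prec$-predecessor), reflecting the splitting there of the single odometer orbit $\mathbb Z$. Combining the two inclusions on the generic set with the earlier identification of the odometer orbit with the cofinality class, we conclude $\xi(M)=\xi(T)\pmod 0$, as claimed.
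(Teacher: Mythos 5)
Your proof of the main, mod-$0$ part of Theorem~\ref{theoremA} is correct, and it reaches the key fact by a somewhat different route than the paper. The paper's engine is Lemma~\ref{lemmaB}: if $x_r=x_{r+1}$, then the whole cylinder of points sharing the coordinates of $x$ from position $r$ onward lies in a single $M$-orbit; this is justified there by transfer from Morse arithmetic on the integers -- the repetition at position $r$ forces the first repetition of every point of the cylinder to occur at position at most $r$, so the dynamics inside the cylinder is governed by the prefix alone and copies the explicitly computed $M$-orbit of $\textbf{0}$, which sweeps $\{0,1,\dots,2^{n}-1\}$. You prove the same cylinder statement abstractly: the tail-cylinders $V_j$ are $\prec$-convex in the cofinality class (your comparison-at-$p^\ast$ argument is correct, since any $w\notin V_j$ compares with every $z\in V_j$ at the same coordinate with the same outcome), and since $M$ is the adic successor map, it must traverse a finite convex set monotonically from bottom to top. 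Your version is more self-contained -- it needs neither the unproved sweeping claim for the orbit of $\textbf{0}$ nor a repetition at a prescribed position -- while the paper's version keeps the argument tied to the integer computations (Morse permutations of $\{2^n,\dots,2^{n+1}-1\}$) that the rest of the paper develops. The two inclusions, the measure estimate, and the traversal bookkeeping (never applying $M$ at the top of $V_j$ nor $M^{-1}$ at the bottom) are all handled correctly.

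Your final paragraph on the exceptional set, however, contains a genuine error. For $y$ cofinal with $(10)^\infty$ it is not true that ``the $M$-orbit cannot exhaust the infinite cofinality class'': by your own cylinder argument, the top element of the interval $V_N$ containing $y$ is the unique point of $V_N$ with no repetition up to position $N$, which, given the alternating tail, is the fully alternating point $(10)^\infty$ itself; hence every point of that class reaches $(10)^\infty$ after finitely many forward steps, and the class coincides with the backward orbit of $(10)^\infty$ -- a single equivalence class of the (partially defined) successor map. For the purely algorithmic $M$, the orbit relation therefore agrees with cofinality on all four exceptional classes; what genuinely fails on the exceptional set is agreement with the odometer relation (the single $T$-orbit $\mathbb{Z}$ splits into the classes of $(0)^\infty$ and $(1)^\infty$, as you correctly observe), and the disagreement with cofinality asserted in the theorem refers to the completed transformation obtained in the paper's Addendum by gluing the two semi-orbits ending at $(10)^\infty,(01)^\infty$ to those starting at $\textbf{0}$ and $-\textbf{1}$, which merges cofinality classes. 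Since all of this concerns a countable null set, it does not affect the mod-$0$ statement, but the sentence as you wrote it is false and should be replaced by the gluing discussion.
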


\begin{proof}
We start the proof with a simple lemma.

\begin{lemma}\label{lemmaB}
Let~$x\in \textbf{Z}_2$ and assume that ~$x=\{x_1 \dots \}$ is
the dyadic expansion of~$x$. If for some~$r$ we
have~$x_r=x_{r+1}$, then each element~$y$ of~$\textbf{Z}_2$ for
which~$y_i=x_i, 1<i<r$ belongs to the~$M$-orbit of~$x$.
\end{lemma}

\begin{proof}
Suppose that~$x_r=0$. Then if we apply the definition of the
Morse transformation to the sequence with
coordinates
\[
x_1=x_2=\dots =x_n=0,
\]
we obtain all integers~$0,1 \dots 2^n-1$, so the fragment
of~$M$-orbit of the point~$x_k=0, k=1 \dots n$ coincides with
the set~$0,1\dots 2^r-1$, consequently all~$y$ with the
condition~$y_i=x_i, 1<i<r$ belongs to the~$M$-orbit of the
point~$\textbf{0}$. If~$x_r=1$ then the same is true if we
start with a sequence with coordinates
\[
x_1=x_2=\dots =x_n=1.
\]
\end{proof}

Returning to the proof of Theorem~\ref{theoremA}, we use the
condition that there are infinitely many~$k$ with~$x_k=1$ in
order to define correctly the full (two-sided) orbit of~$x$
(see the paragraph above about exceptional orbits). Now suppose
that two points~$x$ and $y$ are cofinal, and
\[
x_k=y_k, k>N.
\]
Then there exists~$r>N$ such that~$x_r=x_{r+1}=0$ so, by
Lemma~\ref{lemmaB},~$y$ belongs to the orbit of~$x$, completing
the proof.
\end{proof}

It is also possible to define the Morse transformation as adic transformation
using
\emph{differentiation} of binary sequences. Define the
differentiation operation
\[
D:{\bf Z}_2\to {\bf Z}_2
\]
by
\[
D(\{x_n\}_{n=0}^{\infty}) = \{(x_{n+1}-x_n)\ \mbox{\rm mod}\ 2,\ n=0,1, \dots\}.
\]
We remark that there are no good, simple ``arithmetic'' or
``analytic'' expressions for the behavior of~$D$. The next
result (see~\cite{VS}) relates the Morse--Thue sequence to the
operator~$D$.

\begin{lemma}
$T\circ D = D\circ M.$
\end{lemma}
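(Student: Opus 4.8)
The plan is to verify the identity coordinatewise on the generic set of $x\in\textbf{Z}_2$ (those with infinitely many $0$s, infinitely many $1$s, and infinitely many repetitions $x_n=x_{n+1}$), since the exceptional set has Haar measure zero and the identity is asserted $\pmod 0$. Throughout I adopt the $0$-indexing $x=(x_0,x_1,\dots)$ used in the definition of $D$. The starting observation is that, writing $D(x)_n=x_n\oplus x_{n+1}$ (difference equals sum mod~$2$), one has $D(x)_n=0$ exactly when $x_n=x_{n+1}$. Hence the position $r$ of the first repetition of $x$ -- the quantity that drives the definition of $M$ -- is precisely the position of the first $0$ in $D(x)$, which is exactly the quantity that governs the carry in the odometer $T=(\,\cdot\,)+1$. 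This coincidence is the engine of the whole statement.

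Next I would carry out the two computations. Let $r$ be the first repetition of $x$ and $\delta=x_r=x_{r+1}$; the alternation $x_0\ne x_1\ne\cdots\ne x_r$ gives $D(x)=(1^{r},0,\,\delta\oplus x_{r+2},\,D(x)_{r+2},\dots)$, with $r$ leading ones and first zero at position $r$. Adding $1$ in $\textbf{Z}_2$ flips this leading block of ones to zeros and the first zero to a one, so $T(D(x))=(0^{r},1,\,\delta\oplus x_{r+2},\,D(x)_{r+2},\dots)$. For the other side, the explicit adic description of $M$ replaces coordinates $0,\dots,r$ by $1-\delta$, keeps coordinate $r+1$ equal to $\delta$, and leaves all later coordinates untouched. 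Differentiating this sequence yields $0$ on positions $0,\dots,r-1$ (a constant block), a $1$ at position $r$ coming from $(1-\delta)\oplus\delta$, the value $\delta\oplus x_{r+2}$ at position $r+1$, and $D(x)_n$ for $n\ge r+2$ (since $M$ does not alter the tail). Thus $D(M(x))=(0^{r},1,\,\delta\oplus x_{r+2},\,D(x)_{r+2},\dots)$, identical to $T(D(x))$.

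The conceptual reason behind this bit-level match, which I would record as the guiding principle, is that $D$ carries the modified (Morse) order on $\textbf{Z}_2$ to the standard lexicographic order of the odometer: under $x_j\mapsto x_j\oplus x_{j+1}$ the two local rules $0\prec_0 1$ and $1\prec_1 0$ both become $0\prec 1$, so $D$ is an order intertwiner and therefore sends the $M$-successor $x\lessdot M(x)$ to the $T$-successor $D(x)\lessdot T(D(x))$. The main technical point to watch is that $D$ is two-to-one, its kernel being the two constant sequences $(0)^\infty,(1)^\infty$; consequently $D$ is a genuine factor map only after discarding a null set, and one must check that the \emph{orientation} of the order is preserved, so that successors map to successors rather than predecessors. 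This is exactly what the uniform outcome $0\prec 1$ of the two local rules guarantees. I expect no further obstacle: once the generic first-repetition index $r$ is identified on both sides, the claim is the direct coordinatewise comparison above, and the agreement at coordinate $r+1$ -- where both sides equal $x_{r+1}\oplus x_{r+2}=D(x)_{r+1}$ -- together with the untouched tail closes the argument.
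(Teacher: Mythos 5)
Your proof is correct and takes the same route as the paper: the paper simply states that the identity ``is an immediate corollary of the definition of~$M$,'' and your coordinatewise computation --- identifying the first repetition index $r$ of $x$ with the length of the carry block of $D(x)$ under $T$, and checking $D(M(x))=(0^r,1,\delta\oplus x_{r+2},D(x)_{r+2},\dots)=T(D(x))$ on the generic set --- is exactly that immediate verification, written out in full.
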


This is an immediate corollary of the definition of~$M$. The
new definition was made by the author as an example of the adic
realization of the transformation (see~\cite{VL,V1}). In the
adic realization, the Morse transformation~$M$ is
a~$2$-covering of the odometer in its algebraic form.

\subsection{Jump function of the Morse automorphism, and Morse arithmetic}

Now we give a precise expression for the
``jump-function''~$t(\cdot)$ in the formula~$M(x)=x+t(x)$.
Notice that the value of the function~$t(x)$ depends on the
finite fragment of~$x$; more exactly, on the
fragment~$(x_1,\dots x_r), r=r(x)$ where~$r=\min\{k\mid
x_k=x_{k+1}\}$ (see above).

Define the sequence:
\[
a_r = \left\{ \begin{array}{cc} \frac{2^{r+1}-1}{3}\, & \mbox{if}\ r\equiv 1\ (\mbox{mod}\ 2), \\[1.4ex]
                                \frac{2^{r+1}-2}{3}\, & \mbox{if}\ r\equiv 0\ (\mbox{mod}\ 2)\end{array} \right.
\]
for~$r\geq 0$.

The first few values of~$a_r$ as a function of~$r=0,1 \dots$
are shown below:
\begin{center} \begin{tabular}{c|ccccccccccc}
$r$  & 0 & 1 & 2 & 3 &  4 &  5 &  6 &  7 &   8 &   9 & $\cdots$ \\
\hline $a_r$ & 0 & 1 & 2 & 5 & 10 & 21 & 42 & 85 & 170 & 341 &
$\cdots$ \end{tabular}
\end{center}

This sequence satisfies the recurrence relation
\[
a_{r+1}=2^r+a_{r-1}, \quad r=0,1,2 \dots ,
\]
with the initial conditions~$a_0=0,a_1=1$. Another evident
relation is
\[
a_{r-1}+a_r+1=2^r.
\]
We conclude that
\[
2^{r-1}\leq a_r<2^r,
\]
so in particular there can only be one number~$a_r$ in the
interval between two adjacent powers of~$2$.

The recurrence relation
\[
a_{2n}=2a_{2n-1}+1; \quad a_{2n+1}=2a_{2n},\quad n=1,2\dots;
\quad  a_0=0
\]
is a corollary of the definition. Notice that the dyadic
expansion ~$a_r$ corresponds to the sequence
\[
(01)^{\frac{r+1}{2}}\quad \mbox{for odd}\quad r>1
\]
and
\[
(10)^{\frac{r}{2}} \quad \mbox{for even}\quad r>0,
\]
or equivalently
\[
(01)^n = a_{2n+1}, \quad (10)^n =1(01)^{n-1}= a_{2n}.
\]

For each~$x \in \textbf{Z}_2$, we define~$r=r(x)$ to be the
minimal index of the coordinate for which the equality~$x_r=x_{r+1}$
occurs for the first time (see above). Consequently, for
all~$x$ which has a repetition~$00$ or~$11$ we have the
following result.

\begin{theorem}
\begin{enumerate}
\item The transformation~$M$ is defined on the
    group~$\textbf{Z}_2$ of~$2$-adic integers. It is
    continuous at all but two points:~$-\frac{1}{3}$
    and~$-\frac{2}{3}$. The transformation~$M$ preserves
    the Haar measure on~$\textbf{Z}_2$ and is metrically
    isomorphic to the Morse automorphism.
\item The explicit formula for the Morse transformation
    on~$x \in \textbf{Z}_2$ is
\[
M(x)=\left\{\begin{array}{ll}x+a_r, &\mbox{if}\quad x_r=0, \\
x-a_r, &\mbox{if}\quad x_r=1. \end{array}\right.
\]
where~$r=r(x)$.
\end{enumerate}
Both cases can be expressed in the formula
\[
M(x)=x+(-1)^{x_r}a_r,\quad \mbox{where} \quad r=r(x).
\]
If we compare this with the initial formula, we
obtain
\[
M(x)=T^{t(x)}x=x+t(x),
\]
so
\[
t(x)=M(x)-x= (-1)^{x_{r(x)}} a_{r(x)}.
\]
\end{theorem}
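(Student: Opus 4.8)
The plan is to establish the explicit formula of part (2) by a direct computation of $M(x)-x$ in the group $\textbf{Z}_2$, and then to deduce the structural statements of part (1) from it together with the adic realization already set up above. First I would observe that $t(x)=M(x)-x$ depends only on the finite prefix $(x_1,\dots,x_{r+1})$, where $r=r(x)=\min\{k:x_k=x_{k+1}\}$. Outside the exceptional set this index is finite; the algorithm alters only the coordinates in positions $1,\dots,r$ (it flips position $r$ via $00\to10$ or $11\to01$ and resets positions $1,\dots,r-1$, but leaves position $r+1$ and every later coordinate untouched), so all positions $>r$ cancel in the difference. This both reduces the claim to a finite computation and collapses the four listed patterns $(01)^n00*$, $(01)^n1*$, $(10)^n0*$, $(10)^n11*$ into a single dichotomy according to the value $x_r\in\{0,1\}$.

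Second, I would compute the two resulting cases. When $x_r=0$ the image prefix on positions $1,\dots,r$ becomes all ones, of value $2^r-1$; when $x_r=1$ it becomes all zeros, of value $0$. The original prefix is an alternating block whose value is a geometric sum, and a direct evaluation shows that the alternating prefix of length $r$ ending in $1$ has value exactly $a_r$ (matching $(2^{r+1}-1)/3$ for odd $r$ and $(2^{r+1}-2)/3$ for even $r$), while the one ending in $0$ has value $a_{r-1}$. Hence for $x_r=1$ the difference is $0-a_r=-a_r$, and for $x_r=0$ it is $(2^r-1)-a_{r-1}=a_r$ by the identity $a_{r-1}+a_r+1=2^r$ recorded above. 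In both cases $M(x)-x=(-1)^{x_r}a_r$, which is the asserted formula; comparing with $M(x)=x+t(x)$ yields $t(x)=(-1)^{x_{r(x)}}a_{r(x)}$.

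Third, for part (1) I would argue as follows. Continuity at every point other than $-\tfrac13$ and $-\tfrac23$ holds because $r(x)$ is finite there and the prefix rule is constant on the cylinder of length $r+1$ containing $x$, so $M$ coincides with the fixed translation $x\mapsto x\pm a_r$ on a neighbourhood; the only sequences admitting no repetition at all are the purely alternating $(10)^\infty=-\tfrac13$ and $(01)^\infty=-\tfrac23$, where $M$ is genuinely discontinuous. Measure preservation is then immediate from part (2): on each piece $\{x:\ r(x)=r,\ x_r=\epsilon\}$ the map acts as the translation $x\mapsto x+(-1)^\epsilon a_r$, which preserves Haar measure, and since these pieces partition $\textbf{Z}_2$ modulo a null set and $M$ is a bijection there, $M$ preserves $m$. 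Finally, the metric isomorphism with the classical Morse automorphism is the content of the adic realization constructed above and in \cite{VL,V1,VS}, and can alternatively be read off from the intertwining $T\circ D=D\circ M$ with the differentiation map $D$.

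I expect the only real obstacle to be the careful reading of the algorithm required to see that position $r+1$ is never altered — this is exactly what makes the difference depend on positions $1,\dots,r$ alone — together with the uniform verification, across both parities of $r$, that the two alternating blocks evaluate to $a_r$ and $a_{r-1}$ respectively. The recurrences $a_{r+1}=2^r+a_{r-1}$ and $a_{r-1}+a_r+1=2^r$ recorded above make these identities and the collapse of the two cases into $(-1)^{x_r}a_r$ transparent, so the argument should be short once the reduction is in place.
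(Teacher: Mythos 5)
Your proposal is correct and takes essentially the same route as the paper, which states this theorem with no separate proof, relying on exactly the ingredients you assemble: the adic successor rule alters only positions $1,\dots,r$, the alternating prefixes ending in $1$ and $0$ evaluate to $a_r$ and $a_{r-1}$ respectively, and the recorded identity $a_{r-1}+a_r+1=2^r$ collapses both cases into $M(x)-x=(-1)^{x_r}a_r$. Your write-up simply makes explicit the finite computation (and the cylinder-wise translation argument for continuity and Haar-measure invariance) that the paper leaves implicit, with the metric isomorphism to the symbolic Morse system deferred, as in the paper, to the cited adic-realization results.
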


Our formula can be applied to the integers as elements of the
group of dyadic integers~${\mathbb Z} \subset\textbf{Z}_2$ as
follow. For~$n\in\mathbb Z$, denote by~$r(n)$ the minimal
number of the digit in the dyadic decomposition of
\[
n=\sum x_k 2^k, x_k=0, 1
\]
for which the value~$x_{r(n)}=x_{r(n)+1}= \epsilon(n)$. For
example,
\[
r(n)=1, \quad \epsilon(n)=0,\quad \mbox{if}\quad n\equiv 0
\pmod{4},
\]
\[
r(n)=1, \quad \epsilon(n)=1;\quad \mbox{if}\quad n\equiv 3
\pmod{4},
\]
\[
r(n)=2, \quad \epsilon(n)=0,\quad \mbox{if}\quad n\equiv 1
\pmod{8},
\]
\[
r(n)=2, \quad \epsilon(n)=1;\quad \mbox{if}\quad n\equiv 6
\pmod{8}.
\]
The general formula for~$n,k = 1,2 \dots$ is the following:
\[
r(n)=k, \quad \epsilon(n)=0, \quad\mbox{if}\quad n \equiv
a_{k-1} (\mbox{mod}\quad 2^{k+1})
\]
and
\[
r(n)=k, \quad \epsilon(n)=1, \quad \mbox{if} \quad n \equiv
-a_{k-1}-1 (\mbox{mod}\quad 2^{k+1}).
\]
The general formula for~$M(x)$, which generalizes the formula
above, is
\[
M(n)= n+(-1)^{\epsilon(n)}a_{r(n)}.
\]
Thus we obtain the \emph{Morse order on the integers}: the
following table illustrates this order on the
semigroup~$\mathbb N$ which is half the orbit of \textbf{0}:
\begin{center} \begin{tabular}{c|ccccccccccccccccc}
$n$ & 0 & 1 & 2 & 3 & 4 & 5  & 6 & 7 & 8 & 9 & 10 & 11 & 12 &
13 & 14 & 15  \\ \hline $ M(n) $ & 1 & 3 & 7 &  2 & 5 & 15 & 4
& 6 & 9 & 11 & 31 & 10 & 13 & 8 & 12 & 14
\end{tabular}
\end{center}
\medskip
The extension to negative integers is given by the relation
\[
M(-n)=-M(n-1)-1.
\]
This give the order on the other half orbit corresponding to
\textbf{1}:
\begin{center} \begin{tabular}{c|ccccccc}
-1 & -2 & -3 & -4 & -5 & -6 & -7  \\ \hline -2 & -4 & -8 & -3 &
-6 &-16 & -5  \end{tabular}
\end{center}
These tables give the time substitution at the
point~\textbf{0}. We call this the \emph{Morse order}.

The following sequences show us the Morse dynamics on the
integers, namely what is the Morse re-ordering of the integers:
$$
0\rightarrow 1\rightarrow 3 \rightarrow 2\rightarrow 7 \rightarrow 6 \rightarrow 4 \rightarrow 5\rightarrow 15 \cdots,$$
and for negative integers:
$$-1 \rightarrow -2 \rightarrow -4 \rightarrow -3 \rightarrow -8 \rightarrow -7 \rightarrow -5 \rightarrow-6 \cdots.$$

As we saw, the integers generate the exceptional
(semi)-orbits~$M$ (see above and Section~\ref{subsectA}); our
goal is to describe this re-ordering for generic orbits and to
present the time substitutions in a more explicit form.

\section{The structure of time substitutions; main construction}

Now we are ready to give the answer to the question about
general time substitutions for the Morse automorphism. The
first step is the definition of very interesting finite
permutations (elements of the groups~$S_{2^n}$) which will be
used to describe the re-ordering of the group~$\mathbb Z$, and
the time substitutions.

\subsection{Definition of Morse permutations and Morse order}

Our first construction concerns some special elements of the
finite symmetric groups~$S_{2^n}$ which we call \emph{Morse
permutations}, and defines a linear order on the set~$1,2,\dots
2^n$.

For all~$n>1$ we will define by induction a permutation~$g_n
\in S_{2^n}$. It is convenient for this definition to consider
the ordered set of integers:~$\{2^n,2^n+1, \dots ,2^{n+1}-1\}$,
instead of~$\{1,\dots 2^n\}$; later we will use it as
permutations of an arbitrary linear ordered set on
integers~$b,b+1,b+2, \dots, b+2^{n}-1, b\in \mathbb Z$ (using
the shift~$i \to i+b, i=2^n,\dots 2^{n+1}-1$).

Here are some of the first permutations (n=1,2,3) presented in
cycle form:
\begin{eqnarray*}
g_1&:&(2,3),\\
g_2&:&(4, 5, 7, 6),\\
g_3&:&(8, 9, 11, 10, 15, 14, 12, 13).
\end{eqnarray*}
We define the Morse permutations for arbitrary~$n$ by
induction. Suppose we have already defined~$g_{n-1}$, as a
permutation of the set
\[
\{2^{n-1}, \dots, 2^n-1\}.
\]
Then~$g_n$ will be defined as a permutation of the set
\[
\{2^n, \dots , 2^{n+1}-1\}.
\]
The action of~$g_n$ on the first half of the set~$\{2^n, \dots
, 2^n+2^{n-1}-1\}$ is defined as an ``almost'' shift of
permutation~$g_{n-1}$ onto~$2^n$. More precisely,
\[
g_n(i)=g_{n-1}(i-2^{n-1})+2^{n-1},\quad 2^n\leq i \leq 2^n+
2^{n-1}-1
\]
with one important exception:
\[
g_n(a_{n+1})=2^{n+1}-1
\]
(remember that~$2^n<a_{n+1}<2^{n+1}$). On the second
half~$\{2^n+2^{n-1}, \dots, 2^{n+1}-1\}$ the action of~$g_n$ is
also made of copies of the action of~$g_{n-1}$, but slightly
different:
\[
g_n(i)=g_{n-1}(i-2^n)+2^n,
\]
again with one exception:
\[
g_n(2^n+a_n)=2^n.
\]
All these permutations are well-defined, and are cyclic
permutations, nevertheless the first and second examined halves
of the set~$2^n\dots 2^{n+1}-1$ are ``almost invariant'' in the
sense that there is only one element of each half that has the
image in the opposite half.

We will use the Morse permutations in order to define the
linear order on the set. For this we need to \emph{break the
cycle at one point}. This point we will call the \emph{first}
(or the \emph{minimal}) point, and its pre-image (in the cycle)
as the \emph{last} (or the \emph{maximal}) point. We will write
the cycle from the first point when it is already selected.

We now define two opposite linear orders on the set
\[
\{2^n,2^n+1, \dots ,2^{n+1}-1\},
\]
breaking the cycle as follows. The order~$\tau_n$ has minimal
element~$2^{n+1}-1$ and the second order~${\bar \tau}_n$ has minimal
element~$2^n$. It is clear from the definition that the maximal
(or the last) element in the order~$\tau_n$ is~$a_{n+1}$, and
in the order~${\bar \tau}_n$ the maximal element
is~$2^{n+1}+2^n-a_{n+1}-1$. The order~${\bar \tau}_n$ is simply
the image of~$\tau_n$ under reflection~$i\leftrightarrow
2^{n+1}+2^n-i-1$. Recall that the symbol~$a\lessdot b$ means
that~$b$ is next to~$a$ in the sense of the order.

The structure of the Morse permutation and order will be more
transparent if we divide the set~$\{2^n, \dots , 2^{n+1}-1\}$
into groups of four elements. We will see that there are two
types~($\tau$ and~$\bar \tau$) of such groups which are
alternate.

\begin{example}
$$\tau_3:   \quad  15\lessdot 14\lessdot 12\lessdot 13\lessdot 8\lessdot 9\lessdot 11\lessdot 10,$$
$${\bar \tau}_3: \quad  8\lessdot 9\lessdot 11\lessdot 10\lessdot 15\lessdot 14\lessdot 12\lessdot 13.$$
\end{example}

\subsection{Random linear order on the group $\mathbb Z$, and time substitution for the Morse transformation}

We want to define an explicit linear order on the group~$\mathbb Z$
depending on~$x$ which corresponds to the time substitution
from the odometer to the Morse transformation:
\[
\cdots\negthinspace\rightarrow\negthinspace t(-2,x)\negthinspace \rightarrow
\negthinspace t(-1,x)\negthinspace\rightarrow\negthinspace
t(0,x)\negthinspace=\negthinspace0\negthinspace \rightarrow\negthinspace t(1,x)\negthinspace=
\negthinspace t(x)\negthinspace\rightarrow\negthinspace t(2,x)
\negthinspace\rightarrow
\negthinspace\cdots,
\]
where we recall that~$M^kx=T^{t(k,x)}$. The values of~$t(k,x)$
for a fixed generic~$x$ run over all of the group~$\mathbb Z$.
Thus we want to reorder the orbit of~$T$ to the orbit of~$M$.

\begin{definition}
The Morse random order~$\tau(x)$ (corresponding to the
non-exceptional point~$x$) on the group~$\mathbb Z$ is the
linear order defined using the map~$k \mapsto t(k,x)$
where~$M^kx=T^{t(k,x)}x$. In other words, it is the re-ordering
of the~$T$-order to the~$M$-order on the orbit of the
point~$x$.
\end{definition}

We will give an implicit description of the Morse order
(depending on~$x$). Firstly, we describe the structure of our
answer, and explain what the Morse linear order~$\tau(x)$ looks
like.


\begin{definition}\label{definitionofmorselinearorder}
A \emph{Morse linear order~$\tau$} on the group~$\mathbb Z$
comprises the linear orders on the systems of countably many
finite intervals in~$\mathbb Z$, each of length a power of two,
the union of which is the whole group. On each of the finite
intervals, the linear order follows the Morse order defined
above, and we glue the boundary elements (the maximal and
minimal points of the adjacent intervals).
\end{definition}

Such an ordering, and such a corresponding infinite
permutation, belongs to the class of \emph{locally finite
linear orderings of~$\mathbb Z$} defined in
Section~\ref{sectiononlocallyfiniteorderings}. The order
depends on the point~$x$ and is therefore called a \emph{random
order}.

The locally finite ordering has a system of increasing
intervals on~$\mathbb Z$ of length~$2^k$ for various~$k$, and
we equip each of these with its Morse order. At each stage it appears
that the old interval is included in the new interval. The final
points of each linear order~$\tau_n$ are glued to one of the
boundary points of the next interval. The length and order of
gluing hardly depends on the non-exceptional point~$x$, indeed
the structure of the construction is universal.

We next describe in more detail such a construction.

\subsection{A parametrization of the points in $\textbf{Z}_2$}

It is convenient to parameterize the points~$x\in \textbf{Z}_2$
as follow. Let~$x=(x_1,x_2 \dots)$ be the dyadic decomposition
of~$x$. The sequence of coordinates is a sequence of
independent~$0,1$ variables with
probability~$(\frac{1}{2},\frac{1}{2})$. Instead of the
coordinates of~$x$, we consider all the numbers~$r_1(x),r_2(x)
\cdots$ of coordinates for which~$x_{r_i}=x_{r_i+1}$, and fix
also the value~$0$ or~$1$ of~$x_{r_1}=\epsilon(x)$. For almost
all elements~$x$ the sequence~$\{r_n\}$ is an infinite
increasing sequence, which together with~$\epsilon(x)=0, 1$
defines~$x$ uniquely. Indeed, it is easy to see that if we
know~$\{r_n\}$ and~$\epsilon(x)$ then we can restore all the
subsequent coordinates\footnote{The sequence~$\{r_n(x)\}$ can
be interpreted easily in terms of the differentiation
operation~$D$ from
Section~\ref{sectiontraditionaldefinitionofmorse},
since~$r_n(x)$ is the place on which the sequence~$Dx$ has
a~$1$.}. The probabilistic properties of the parameters can be
obtained from the fact that the Haar measure is the Bernoulli
measure.

\begin{lemma}\label{geometricaldistributionlemma}
\begin{enumerate}
\item The differences~$r_{n+1}-r_n$ are mutually
    independent and have the same geometrical
    distribution
\[
    Prob\{r_n=k\}=2^{-k}, k=1,\dots.
    \]
\item Consider the function~$t(x)$ from
    Section~\ref{sectionongeneraltimesubstitutions}
    with~$Mx=T^{t(x)}x$. Then
\[
Prob\{t(x)=\pm a_r\}\equiv Prob \{\Theta(x)(\textbf{1})
=
\pm a_r\}=Prob\{r_1(x)=r\}=\textstyle\frac{1}{2^r}.
\]
\end{enumerate}
\end{lemma}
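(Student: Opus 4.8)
The plan is to reduce both parts to a single structural fact: under Haar measure the ``derivative'' coordinates of $x$ form an i.i.d.\ fair sequence. Once this is established, Part~(1) becomes an elementary renewal computation and Part~(2) follows at once from the explicit jump formula proved above.

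First I would pass from the dyadic coordinates $(x_1,x_2,\dots)$ to the differenced coordinates $y_n:=(Dx)_n=(x_{n+1}-x_n)\bmod 2$, $n\ge1$. The map $x\mapsto(x_1,y_1,y_2,\dots)$ is a bijection of $\{0,1\}^{\mathbb N}$ onto itself, since $x_1$ together with the $y_n$ recovers $x_{n+1}=x_n\oplus y_n$ by induction. This map is an $\mathbb F_2$-linear (lower bidiagonal) continuous automorphism of the group $\prod_1^{\infty}({\mathbb Z}/2)\cong\textbf{Z}_2$, and every continuous automorphism of a compact group carries Haar measure to Haar measure; equivalently, once $x_1,y_1,\dots,y_{n-1}$ are fixed the value $x_n$ is determined while $x_{n+1}$ is a fresh independent fair bit, so $y_n=x_n\oplus x_{n+1}$ is again uniform on $\{0,1\}$ and independent of the past. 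Hence $y_1,y_2,\dots$ are mutually independent, each uniform on $\{0,1\}$, and independent of $x_1$.

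Next I would identify $\{r_n(x)\}$ with the successive positions of a fixed symbol in $(y_n)$: a repetition $x_r=x_{r+1}$ is precisely the event $y_r=0$ (positions of the opposite symbol have the same law, which disposes of the harmless index convention in the footnote). The $r_n$ are thus the successive hitting times of that symbol for the filtration generated by $(y_n)$, and each is a stopping time. Part~(1) then follows from the renewal property of an i.i.d.\ sequence: $\Pr\{r_1=k\}=\Pr\{y_1=1,\dots,y_{k-1}=1,y_k=0\}=2^{-k}$, and since the post-$r_n$ sequence $(y_{r_n+1},y_{r_n+2},\dots)$ is again i.i.d.\ fair and independent of $\{r_1,\dots,r_n\}$, the increments $r_1,\ r_2-r_1,\ r_3-r_2,\dots$ are mutually independent with the common geometric law $\Pr\{\,\cdot=k\,\}=2^{-k}$ (which is how the displayed formula is to be read).

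For Part~(2) I would invoke the explicit jump formula established above, $t(x)=(-1)^{x_{r(x)}}a_{r(x)}$ with $r(x)=r_1(x)$, so that $|t(x)|=a_{r_1(x)}$ and $t(x)=t(1,x)=\Theta(x)(\textbf{1})$. Because $2^{r-1}\le a_r<2^r$, the numbers $a_r$ are pairwise distinct, whence the event $\{t(x)=\pm a_r\}$ coincides with $\{r_1(x)=r\}$, whose probability is $2^{-r}$ by Part~(1). The only genuine work is the first step: verifying that the passage to differenced coordinates is measure preserving, so that the overlapping events $\{x_r=x_{r+1}\}$ linearize into an i.i.d.\ structure. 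Once that is in place the renewal and distributional conclusions are routine.
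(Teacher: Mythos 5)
Your proposal is correct and follows exactly the route the paper indicates: the paper's entire justification is the remark that Haar measure is Bernoulli $(\frac12,\frac12)$ together with the footnote identifying $r_n(x)$ with the positions of a fixed symbol in the differentiated sequence $Dx$, and your argument simply makes this precise (the differenced coordinates are i.i.d.\ fair bits, the $r_n$ are renewal times, and Part~(2) follows from $t(x)=(-1)^{x_{r(x)}}a_{r(x)}$ and the distinctness of the $a_r$). Your parenthetical handling of the sign convention in the footnote ($Dx$ has a $0$, not a $1$, at a repetition) and of how the displayed formula in Part~(1) is to be read are both sound.
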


This means that the values of~$t(x)$ are not arbitrary, and
have exponentially decreasing probability.

Fix~$x$; for each such choice, corresponding
data~$\{r_n(x)=r_n\}_{n \in {\mathbb N}}$,
and~$\epsilon(x)=\epsilon$, we make a corresponding re-ordering
of the group~$\mathbb Z$. The point~$x$ will correspond to~$0$
in the group~$\mathbb Z$, and the ordinary order on the
group~$\mathbb Z$ corresponds to the dynamics of the
odometer~$k \backsim T^k x$.

\subsection{Construction of the Morse linear order, and its time-substitution.}

Now we describe the algorithm which sequentially constructs,
for each fragment of dyadic numbers, the final interval
on~$\mathbb Z$ with the needed linear order.

\begin{enumerate}
\item The initial interval is constructed as follows.
    Consider~$r_1(x)$, which is greater than or equal
    to~$1$ by definition. If~$r_1=1$, then~$x=(0,0
    *,*\dots)$ or~$x=(1,1,*,* \dots )$. In the first
    case,~$Mx=Tx=(1,0,*,*\dots$), so the~$M$-image of~$x$
    (of~$0$) is the same as the~$T$-image and is~$1$,
    thus~$1$ is~$M$-next to~$0$. In the second
    case~$Mx=T^{-1}x$, so that~$-1$ is~$M$-next to~$0$.
    Thus the initial interval is either~$\{0,1\}$ with the
    usual linear order, or~$\{-1,0\}$ with that linear
    order. Suppose now that~$r_1=r>1$. In this case the
    initial~$(r-1)$-fragment of~$x$ has
    coordinates~$(01)^s, (10)^s,1(01)^s$ or~$0(10)^s$,
    where~$s=\frac{r-1}{2}$ or~$s=\frac{r-2}{2}$ depending
    on the parity of~$r$. We will consider~$2^r$ points
    from~$\textbf{Z}_2$ whose coordinates with
    indices~$m>r$ are the same as the coordinates of~$x$.
    The set of these points in~$\mathbb Z$ is the
    interval~$0,1,\dots 2^r$, and we will shift this set in
    order that the fragment~$x_1,\dots x_r$ of the
    point~$x$ starts on the place of~$0$ in~$\mathbb Z$.
    We
    translate the interval of integers~$\{0,1 \dots
    2^r-1\}\subset\mathbb Z$ to the interval of integers
  \[
  \bar I_1=\{-a_{r-1}, -a_{r-1}+1, \dots 0,\dots,
  2^r-a_{r-1}-1=a_r\}
  \]
   if~$x_r=0$, or
\[
   {\bar I}_1=\{-a_r, -a_r+1,\dots, 0,\dots,
   2^r-a_r-1=a_{r-1}\}
\]
if~$x_r=1$. These are the initial intervals of our
     construction, namely
\[
     I_1=(-a_{r-1},a_r)
\]
and
\[
{\bar I}_1=(-a_r,a_{r-1}).
\]
In the construction we have only used the
fragment
\[
(x_1,x_2,\dots x_{r+1})
\]
of the point~$x$. We must define a new linear order on this
set according to the action of the Morse transformation.
\item Now we apply the Morse order~$\tau_r$ on the
    interval~$I_1$ and~${\bar\tau}_r$ on the
    interval~${\bar I}_1$ defined above, using the shift
    of~$2^r,2^{r+1}-1$ to those two intervals, as mentioned
    in the previous item. We obtain a linear order on the
    intervals depending on whether~$x_r$ is~$0$ or~$1$.
    Carrying the boundary points from the interval
\[
    2^n,\dots, 2^{n+1}-1
\]
    to the interval which we obtained gives the following
    boundary points of our linear order. Initial (minimal)
    points are
\[
    -a_{r-1} \in(I_1,\tau(r)),
\]
    correspondingly
\[
    a_{r-1}\in({\bar I}_1),{\bar\tau}(r)),
\]
    and the maximal (or last) points are
\[
    a_{r-1}+1 \in(I_1,\tau(r)),
\]
    correspondingly
\[
    -a_{r-1}-1 \in({\bar I}_1,{\bar\tau}(r)).
\]
    We have obtained the initial step of the construction
of the linear order~$\tau(x)$.
\end{enumerate}

\begin{example}
Let~$r_1=3$. Then~$x=(0,1,0,0,*,\dots)$
or~$x=(1,0,1,1,*\dots)$, and one of the ends of the interval
will be~$a_r=5$, and we obtain the interval
\[
(-2,-1,0,1,2,3,4,5)
\]
for the first case and
\[
(-5,-4,-3,-2,-1,0,1,2)
\]
for the second case. The initial points are~$-2$ and~$+2$ and
the last points are~$3$ and~$-3$ correspondingly. The linear
orders are~$\tau$ and~$\bar\tau$, namely
\[
-2\rightarrow -1\rightarrow 1\rightarrow  0\rightarrow
5\rightarrow 4\rightarrow 2\rightarrow 3,
\]
or
\[
2\rightarrow 1 \rightarrow -1\rightarrow 0\rightarrow -5\rightarrow -4\rightarrow -2\rightarrow -3.
\]
\end{example}

We have given an explicit form of the first part of the time
substitution for each point, so we already have an algorithm
for calculating the function $t(\cdot)$. Because this function
uniquely defined the functions~$t(k,\cdot)$ by the formula
\[
t(k,x)=\sum_{i=1}^{k-1}t(M^i x),
\]
it is possible to stop the algorithm for constructing the time
substitution here. However we will give a continuation in order
to describe it as a random re-ordering of the whole
group~$\mathbb Z$.

\begin{enumerate}
\item[(3)] The general inductive step may be described as
    follows. Suppose that we have already considered the
    first~$n-1$ members of the sequence~$r_{n-1}(x)\equiv
    r_{n-1}$, and obtained the linear order of one of the
    intervals~$I_{n-1}(x)=\{b_{n-1},c_{n-1}\}$ of
    length~$2^{r_{n-1}}$ and including~$0\in\mathbb Z$
    which is equipped with the Morse linear
    order~$\tau_{r_{n-1}}$ or~$\bar\tau_{r_{n-1}}$ with the
    minimal point of the order coincided with one of the
    endpoints of~$I_n$, either~$b_{n-1}$ or~$c_{n-1}$
    correspondingly. We will choose the next
    interval~$I_n(x)=\{b_n,c_n\}$ and define a linear order
    on it so as to include~$I_{n-1}(x)$, and such that the
    restriction of the linear order on~$I_{n-1}(x)$
    coincides with the initial linear order. Consider the
    number~$r_n$, being the next after~$r_{n-1}$ with equal
    coordinates~$x_{r_n}=x_{r_n +1}$. Denote the maximal
    point of the order on~$I_{n-1}$ by~$l_{n-1}$. There are
    two cases:
    \begin{enumerate}
\item If~$r_n=r_{n-1}+1$, then~$x_{r_n}=x_{r_{n+1}}$
    and in this case~$I_n=I_{n-1}\cup J$, the minimal
    element of~$I_n$ is the same as in~$I_{n-1}$,
    where~$J$ is an interval adjoining~$I_{n-1}$ from
    the side which is opposite to the minimal element.
    The linear order on~$I_n$ has the same type~$\tau$
    or~$\bar\tau$ as on~$I_{n-1}$. The next element to
    the maximal element of~$I_{n-1}$ in~$I_n$ will be
    the second (non-minimal) endpoint of~$I_n$.
\item If~$r_n>r_{n-1}+1$ then the construction depends
    on the parity of the difference~$r_n-r_{n-1}>1$. If
    this difference is odd, then~$x_{r_{n-1}}\ne
    x_{r_n}$ and the Morse order on~$I_n$ will change
    its type to be opposite to the type of
    order~$I_{n-1}$; in particular the minimal element
    of~$I_n$, say~$b_n$, will be on the opposite side
    to the minimal element~$c_n$ of~$I_{n-1}$. If the
    difference is even, then minimal elements are
    both~$b$ or both~$c$. Now the interval~$I_n=J\cup
    I_{n-1}\cup J'$ where~$J,J'$ are adjacent to
    the~$I_{n-1}$ intervals in~$\mathbb Z$. The lengths
    of~$J$ and of~$J'$ are equal to
\[
    |J|=\sum_{i:r_{n-1}+1<i<r_n-1;i\equiv r_{n-1}(2)}
    2^i;
\]
and
\[
|J'|=\sum_{i:r_{n-1}+1<i<r_n-1;i\equiv
   1+r_{n-1}(2)} 2^i.
\]
It is clear that the length of~$I_n$ is~$2^{r_n}$. By
definition, the restriction of the Morse order on~$I_n$
onto the interval~$I_{n-1}$ coincides with the initial
order on~$I_{n-1}$. The next element to the maximal
element of~$I_{n-1}$ in~$I_n$ will be the second
(nonminimal) endpoint of the interval~$I_n$.
\end{enumerate}
\end{enumerate}

The randomness of the construction and of the re-ordering
consists in the various possibilities for the
sequence~$r_n(x)$ and the values~$x_{r_n}$. Thus the
re-ordering can be different for various values of~$x$.
Nevertheless the structure of the new orders are similar
for all points. Now it is evident that the probabilistic
behavior of the length of the intervals~$I_n$ depends
precisely on the sequence~$r_n$ and the size of jumps has
geometrical distribution (see
Lemma~\ref{geometricaldistributionlemma}). For example, the
long jumps have exponentially small probability.

\subsubsection{An exercise and an informal explanation}

A good concrete example of the ordering of~$\mathbb Z$ is given
by rational (periodic) elements of~$\textbf{Z}_2$. We will give
the first fragment of the linear order for\footnote{This
equality is true
because
\[
\underbrace{(100)^{\infty}+\cdots+(100)^\infty}_{7{\rm {\
}terms}}=(1)^{\infty}=-1.
\]
Similar arguments give the
equalities~$(01)^{\infty}=-\frac{2}{3},(10)^{\infty}=-\frac{1}{3},(1100)^{\infty}=-\frac{1}{5}$
and so on.}
\[
x=(100)^{\infty}=-\textstyle\frac{1}{7}.
\]
 and leave for the reader the case of
\[
 x=(1100)^{\infty}=-\textstyle\frac{1}{5}.
\]
The orbit of the point~$-\frac{1}{7}=(100)^{\infty}$ is
interesting because it is a generic point (in the sense that
the orbit is of type~$\mathbb Z$), and at the same time the
values of the coordinates which are repeated are the same,
so~$x_{r_k}=0$. In the case of~$(1100)^{\infty}$ the situation is
more complicated: the values of~$x_{r_k}$ change as~$x_{r_{2k+1}}=1,
x_{r_{2k}}=0$. The periodicity does not give any simplifications,
so together both cases give the full picture.

This is the beginning: the invariant~$32$ digits of the linear
order for~$-\frac{1}{7}=(100)^{\infty}$ are
\[
\{\dots |\lessdot-9 \lessdot-8 \lessdot -6 \lessdot -7,| \lessdot -2 \lessdot -3 \lessdot -5 \lessdot -4|,
\lessdot +6 \lessdot +5 \lessdot +3 \lessdot +4|
$$
$$\lessdot -1,\lessdot 0 \lessdot 2,\lessdot 1|, \lessdot22 \lessdot 21\lessdot 19\lessdot 20|,\lessdot 15\lessdot 16\lessdot 18 \lessdot 17|,\lessdot$$
$$\lessdot 7\lessdot 8\lessdot 10 \lessdot 9|\lessdot 14 \lessdot 13\lessdot 11
\lessdot 12|\dots\}.
\]
Here~$a\lessdot b$ means that~$Mx=T^a x, M^2x=T^bx$, for example $MT^4=T^6$ and~$MT^6=T^5$.

We can see that in all the examples the order subdivided (as
marked by~$|$) into the blocks with~$4$ points with
order~$(1,2,4,3)$ or~$(4,3,1,2)$, then the~$4$ blocks generate
the block of the next level, and so on. But the distance
between the quadruples (or jumps) depends on~$x$, more exactly
on the number~$k$ of adjacent coordinates which have the same
values~$r_k=r_{k+1}$.

The cases~$x= (0)^\infty, (1)^\infty, (01)^\infty, (10)^\infty$
remain. As we will see in the next section, these are
one-sided: the first two left-sided, and the second two
right-sided.

\subsection{Addendum: Exceptional orbits}
The points
\[
\textbf{0}=(0)^{\infty},\textbf{-1}=(1)^{\infty}
\]
are exceptional: they have no full orbits because they have no
pre-images. So the semi-orbit of~$\textbf{0}$ defines a linear
order (and a permutation) on the semigroup~${\mathbb Z}_+$,
and the semi-orbit of~$\textbf{1}$ similarly defines a linear
order on~${\mathbb Z}_-$ (see the formulas at the end of
Section~\ref{subsectA}). The points with denominator~$3$ are
also exceptional and not generic, as we saw. The orbit of the
point~$(10)^{\infty}$ does not coincide with an orbit of the
odometer. Now we can obtain the complete comparison of the
orbit partition on~$\textbf{Z}_2$ for the odometer~$T$ and the
Morse transformation~$M$. We remark that for the odometer the
points~$\textbf{0}=(0)^{\infty}$ and~$\textbf{1}=(1)^{\infty}$
belong to one orbit, but for the Morse transformation this is
not true.

Consider the following orbits of the odometer~$T$:
\begin{eqnarray*}
\tau_0,&&\mbox{ the orbit of }\textbf{0};\\
\tau_1,&&\mbox{ the orbit of }\textstyle\frac{3k+1}{3}, k \in \mathbb Z;\\
\tau_2,&&\mbox{ the orbit of }\textstyle\frac{3k+2}{3}, k\in \mathbb Z.
\end{eqnarray*}
On the other side, the Morse transformation~$M$ has two
positive semi-orbits (which have no past), namely the orbits of
the points~$0^{\infty}$ and~$1^{\infty}$, and two negative
semi-orbits (which have no future), namely the orbits of the
points~$(01)^{\infty}$ and~$(10)^{\infty}$. We call them
\emph{maximal points}, and denote the set of these two points
as~$MAX=\{ (10)^{\infty},(01)^{\infty}\}$. We will join those
three orbits of the odometer~$T$ with four semi-orbits of~$M$
and obtain two new orbits of the Morse automorphism~$M$ by
definition. By the initial definition, the orbit~$\tau_0$
divides into two positive semi-orbits of~$M$ -- one starts
with~$0$ and the second with~$-1$: each of these glue
correspondingly with orbits~$\tau_1$ and~$\tau_2$ of~$T$, and
recall that for the initial definition of~$M$ they are only
negative semi-orbits. So we glue each two positive semi-orbits
to two negative semi-orbits, and obtain two new full orbits
of~$M$.

Because the transformation~$M$ is not defined on these two
sequences, we can by definition choose the values among another
two sequences which conversely have no \emph{pre-images}, or
for which the inverse map~$M^{-1}$ is not defined. There are
only two such points~$z$ for which there is no~$x$ with the
property~$M(x)=z$, namely~$(0)^\infty=0\in\mathbb Z$
and~$(1)^\infty=-1\in \mathbb Z$.

We may assume\footnote{We glued the semi-orbit of~$\textbf{0}$
to the semi-orbit of~$-\frac{1}{3}$, and the semi-orbit
of~$\textbf{1}$ to~$-\frac{2}{3}$, but we can change this
gluing to the opposite one.} that
\[
M((10)^\infty)\equiv M(-\textstyle\frac{1}{3})=(0)^{\infty} =\textbf{0},\
\ \ M((01)^{\infty}\equiv M(-\textstyle\frac{2}{3})=
(1)^\infty=-\textbf{1}.
\]
Here the boldface numbers denote rational
integers:~$\textbf{0},\textbf{1}\in \mathbb Z\subset
\textbf{Z}_2$.

This gives us the following picture:
\[
\cdots
+\textstyle\frac{7}{3},+\frac{10}{3},+\frac{16}{3},+\frac{13}{3},-\frac{17}{3},-\frac{14}{3},
\]
\[
-\textstyle\frac{8}{3},-\frac{11}{3},+\frac{4}{3},
+\frac{1}{3},-\frac{5}{3},-\frac{2}{3}, \quad (!) \quad
-1,-2,-4.-3, -8\dots;
\]
the second is finished with~$-\frac{1}{3}$; and we prolonged it
with~$0$:
\[
\cdots-\textstyle\frac{10}{3},+\frac{16}{3},+\frac{14}{3},+\frac{11}{3},+\frac{5}{3},+\frac{8}{3},
\]
\[
-\textstyle\frac{7}{3},-\frac{4}{3},+\frac{2}{3}, -\frac{1}{3} \quad (!)
\quad  0, 1, 3, 2, 7\dots.
\]
As predecessors to the symbol~$!$ in both pictures we have the
former orbits of~$T$, which now become the negative semi-orbits
of~$M$, which we glued to integers. Thus the transformation~$M$
is now defined on the whole group~$\textbf{Z}_2$. In
particular, we have defined~$M$ for all integers from~$\mathbb
Z$.

It is clear from the definition that
\[
M(-n)=-M(n-1)-1, \quad n=1,\dots.
\]
This formula is valid for all~$x \in \textbf{Z}_2$
since~$M(-x)=-M(x-1)-1$, and is true even for the exceptional
points~$x=-\frac{1}{3}$ and $x=-\frac{2}{3}$ since
\[
M(-\textstyle\frac{1}{3})=-M(-\frac{2}{3})-1.
\]
It is easy to deduce from the definition of~$M$ that~$M$ is
continuous on~$\textbf{Z}_2 \setminus MAX$, and that it is not
possible to extend~$M$ by continuity to those two points,
because, for example, the limit of each of the two
sequences~$(10)^n(0)^{\infty}$ and~$(10)^n(1)^{\infty}$ as~$n$
tends to infinity, is the same,
namely~$(10)^\infty(=-\frac{1}{3})$, but the values of~$M$ on
the sequences tends in the first case
to~$(1)^{\infty}(=-\textbf{1})$ and in the second case
to~$(0)^{\infty}(=\textbf{0})$.

Thus, except for three orbits of~$T$ (or four semi-orbits
of~$M$), all the other orbits are simultaneously orbits of both
transformation. This completely defines the adic realization of
the Morse transformation and its orbit partition, as well as
the time substitution of the odometer.

\section{Conclusion}
We gave an explicit form of the time change on the orbits of
the odometer in order to obtain the Morse transformation. The
answer shows us that the structure of the new ordering of the
orbit is \emph{locally finite} in the sense we have defined.
We proved that Morse transformation and odometer are allied in the
sense of our definition (section 2). This structure
of the time change indicates that the two automorphisms are not
very different in terms of their orbits (but nevertheless can
have different spectrum). It is possible that they have the
same \emph{entropy scale} in the sense of~\cite{V2}. What can
we say more generally about the properties of automorphisms
which are related by such a time change? For example, if one is
Bernoulli will the second also be Bernoulli?

The measure on the space of linear orders of~$\mathbb Z$ (or on
the space of locally finite permutations of~$\mathbb Z$) which
we have defined with our algorithm is of great interest itself.
It is the image of the invariant measure on~$\textbf{Z}_2$ with
respect to the function~$S\{t(k,\cdot)\}_{k\in \mathbb
Z}:\textbf{Z}_2 \rightarrow {\frak S}_{\mathbb Z}$.

It is interesting also to study the time change for other
examples of measure-preserving automorphisms, for example the
other substitutions like the Morse transformation, and
automorphism with positive entropy. The classes of random
infinite permutations which appear in these cases give new
examples of nontrivial measures on the infinite symmetric
group.

\end{document}